\newtheorem{theorem}{Theorem}[section]
\newtheorem{lemma}[theorem]{Lemma}
\newtheorem{corollary}[theorem]{Corollary}
\newtheorem{proposition}[theorem]{Proposition}
\def\eps{\varepsilon}
\def\GL{\mathrm{GL}}
\def\C{\mathbf{C}}
\def\Z{\mathbf{Z}}
\def\N{\mathbf{N}}
\def\kk{\mathbf{k}}
\def\Q{\mathbf{Q}}
\def\C{\mathbf{C}}
\def\Ker{\mathrm{Ker}\,}
\def\into{\hookrightarrow}
\def\onto{\twoheadrightarrow}
\def\ii{\mathrm{i}}
\def\la{\lambda}
\newcommand\bpi{{\boldsymbol{\pi}}}
\date{February 4, 2017.}
\begin{document}
\centerline{}

\title{Lattice extensions of Hecke algebras}
\author[I.~Marin]{Ivan Marin}
\address{LAMFA, Universit\'e de Picardie-Jules Verne, Amiens, France}
\email{ivan.marin@u-picardie.fr}
\subjclass[2010]{20C08,20F36, 20F55}
\medskip

\begin{abstract} 
We investigate the extensions of the Hecke algebras of finite (complex) reflection groups
by lattices of reflection subgroups that we introduced, for some of them, in our previous work on the Yokonuma-Hecke algebras and their connections with Artin groups.
When the Hecke algebra is attached to the symmetric group, and the lattice contains all reflection subgroups, then these algebras are the diagram algebras of braids and ties of Aicardi and Juyumaya. We prove a stucture theorem
for these algebras, generalizing a result of Espinoza and Ryom-Hansen from the case of the symmetric group to the general case. We prove that these algebras are symmetric algebras at least when $W$ is a Coxeter group, and in general under the trace conjecture of Brou\'e, Malle and Michel.
\end{abstract}

\maketitle

\tableofcontents

\section{Introduction}

Let $W$ be finite complex reflection group, for instance a finite Coxeter group.
Let $B$ denote the braid group associated to $W$ in the sense of Broué-Malle-Rouquier
(see \cite{BMR}), which in the case of a finite Coxeter group coincides with the Artin group
attached to it. We denote $\pi : B \to W$ the natural projection.

The object of this paper is to introduce and analyse a family of algebras denoted
$\mathcal{C}(W,\mathcal{L})$, where $\mathcal{L}$ is a finite join semi-lattice 
which lies inside the poset made of the full reflection subgroups of $W$, ordered by inclusion. 
Here a reflection subgroup of $W$
is called \emph{full} if, for any reflection in this subgroup, all the (pseudo-)reflections
with the same reflecting hyperplane belong to it. The semi-lattice
$\mathcal{L}$ is additionnally supposed to be stable under the natural
action of $W$ on the lattice of reflection subgroups, and to contain all the cyclic (full) reflection subgroups, and the trivial subgroup as well. Such a semi-lattice will be called
an \emph{admissible} semi-lattice.

Let $\mathcal{A}$ denote the hyperplane arrangement attached to 
$W$, namely the collection of its reflecting hyperplanes.
Let $\kk$ be a commutative ring with $1$,
containing elements $a_{H,i}$ where $H \in \mathcal{A}$, $0 \leq i < m_H$
where $m_H$ is the order of the cyclic subgroup of $W$ fixing $H$, with
the convention that $a_{H,i} = a_{w(H),i}$ for every $H \in \mathcal{A}, w \in W$
and $a_{H,0}$ is invertible inside $\kk$. Let $R$ denote the generic ring of Laurent
polynomials with integer coefficients
$\Z[a_{H,i}, a_{H,0}^{\pm 1}]$, with the same conventions. Our conditions on $\kk$ mean
that it is a $R$-algebra. We now define $\kk$-algebras
$\mathcal{C}_{\kk}(W, \mathcal{L})$, with the convention that
$\mathcal{C}(W, \mathcal{L}) = \mathcal{C}_R(W,\mathcal{L})$.

These algebras are defined as follows. First consider the algebra $\kk \mathcal{L}$
defined as the free $\kk$-module with basis elements $e_{\la}, \la \in \mathcal{L}$,
and where the multiplication is defined by $e_{\la} e_{\mu} = e_{\la \vee \mu}$. 
This is
sometimes called the Möbius algebra of $\mathcal{L}$. 
Elements of $\mathcal{L}$
can be identified with the collection of reflecting hyperplanes attached to them,
and we let $e_H = e_{\{ H \} }$ denote the idempotent attached
to the subgroup fixing $H \in \mathcal{A}$. We shall use this identification whenever
it is convenient to us.

By definition $W$ acts
by automorphisms on $\kk \mathcal{L}$, hence so does $B$, and one
can form the semidirect product $\kk B \ltimes \kk \mathcal{L}$. The algebras
$\mathcal{C}_{\kk}(W,\mathcal{L})$ are defined as the quotient of 
$\kk B \ltimes \kk \mathcal{L}$ by the two-sided ideal $\mathfrak{J}$ generated by the elements $\sigma^{m_H} - 1 - Q_s(\sigma)e_{H }$ where $\sigma$ runs among the braided reflections of $B$, $s = \pi(\sigma)$ is the corresponding pseudo-reflection,
$H = \Ker(s-1)$, and  $Q_s(X) = \sum_{k=0}^{m_H-1} a_{H,k} X^k - 1\in \kk[X]$
(see section \ref{sect:imageideal} for more details).

Let $\mathfrak{J}_H$ is the ideal of $\kk B$
generated by the $\sigma^{m_H} - 1 - Q_s(\sigma) = \sigma^{m_H} - \sum_{k=0}^{m_H-1} a_{H,k} \sigma^k$. This quotient $H_k(W) = (\kk B)/\mathfrak{J}_H$ 
is by definition the Hecke algebra attached to $W$ in the sense of Broué-Malle-Rouquier,
and is the usual Iwahori-Hecke algebra of $W$ when $W$ is a finite Coxeter group. The following preliminary result explains the title, making our algebras appear as natural
extensions of the Hecke algebra $H_{\kk}(W)$.

\begin{proposition} Let $\mathcal{L}$ be an admissible lattice for $W$.
There exists a surjective algebra morphism $\mathcal{C}_{\kk}(W, \mathcal{L}) \to
H_{\kk}(W)$, defining a \emph{split} extension of $H_{\kk}(W)$.
\end{proposition}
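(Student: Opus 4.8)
The plan is to read off both the surjection and an algebra section of it directly from the defining presentations; the key object will be the idempotent of $\kk\mathcal{L}$ attached to the maximum of $\mathcal{L}$.

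First I would construct the surjection. The augmentation $\varepsilon:\kk\mathcal{L}\to\kk$, $e_\lambda\mapsto 1$, is a $\kk$-algebra morphism: it sends the unit $e_{\hat 0}$ of $\kk\mathcal{L}$ to $1$ and $e_\lambda e_\mu = e_{\lambda\vee\mu}$ to $1 = 1\cdot 1$; and it is $W$-equivariant for the trivial action of $W$ on $\kk$. Hence it induces an algebra morphism $\kk B\ltimes\kk\mathcal{L}\to\kk B\ltimes\kk = \kk B$ extending $\mathrm{id}_{\kk B}$ and $\varepsilon$, and composing with the Hecke quotient $\kk B\onto H_{\kk}(W)$ produces a surjective algebra morphism $\Phi:\kk B\ltimes\kk\mathcal{L}\to H_{\kk}(W)$ with $\Phi(e_\lambda) = 1$ for all $\lambda\in\mathcal{L}$ and $\Phi|_{\kk B}$ the canonical projection. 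Since $\Phi$ carries the generator $\sigma^{m_H}-1-Q_s(\sigma)e_H$ of $\mathfrak{J}$ to $\sigma^{m_H}-1-Q_s(\sigma)$, which is a generator of $\mathfrak{J}_H$, it kills $\mathfrak{J}$ and descends to a surjective algebra morphism $\bar\Phi:\mathcal{C}_{\kk}(W,\mathcal{L})\onto H_{\kk}(W)$.

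Next I would build the section. Let $\hat 1$ denote the maximum of the finite join-semilattice $\mathcal{L}$, and consider $e_{\hat 1}\in\kk\mathcal{L}$. Since $W$ acts on $\mathcal{L}$ by poset automorphisms it fixes $\hat 1$, so $e_{\hat 1}$ is $W$-invariant, hence $B$-invariant, hence a central idempotent of $\kk B\ltimes\kk\mathcal{L}$ and of $\mathcal{C}_{\kk}(W,\mathcal{L})$; and $e_He_{\hat 1} = e_{\hat 1}$ for every $H\in\mathcal{A}$, because admissibility of $\mathcal{L}$ forces the cyclic reflection subgroup fixing $H$ to belong to $\mathcal{L}$, hence to lie below $\hat 1$. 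As $e_{\hat 1}$ is a central idempotent, $\beta\mapsto e_{\hat 1}\beta$ defines an algebra morphism from $\kk B$ onto the corner algebra $e_{\hat 1}\mathcal{C}_{\kk}(W,\mathcal{L})$ (whose unit is $e_{\hat 1}$). Multiplying the defining relation $\sigma^{m_H} = 1 + Q_s(\sigma)e_H$ of $\mathcal{C}_{\kk}(W,\mathcal{L})$ by $e_{\hat 1}$ and using $e_He_{\hat 1} = e_{\hat 1}$ yields $e_{\hat 1}\sigma^{m_H} = e_{\hat 1} + Q_s(\sigma)e_{\hat 1}$; hence this morphism kills $\mathfrak{J}_H$ and factors through $H_{\kk}(W)$, giving an algebra morphism $\iota:H_{\kk}(W)\to\mathcal{C}_{\kk}(W,\mathcal{L})$.

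Finally I would check $\bar\Phi\circ\iota = \mathrm{id}_{H_{\kk}(W)}$, which is immediate: for $\beta\in B$ one has $\bar\Phi(\iota(\bar\beta)) = \bar\Phi(e_{\hat 1}\beta) = \Phi(e_{\hat 1})\Phi(\beta) = 1\cdot\bar\beta = \bar\beta$. Thus $\iota$ is injective and $\bar\Phi$ is a split surjection; in fact $\iota$ identifies $H_{\kk}(W)$ with the direct factor $e_{\hat 1}\mathcal{C}_{\kk}(W,\mathcal{L})$ of $\mathcal{C}_{\kk}(W,\mathcal{L})$, unitally onto that corner but not onto all of $\mathcal{C}_{\kk}(W,\mathcal{L})$ --- which is the intended sense of ``split''. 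The main (mild) obstacle I anticipate is the two structural facts used above: that $e_{\hat 1}$ is $W$-invariant (this is exactly where $W$-stability of $\mathcal{L}$ enters) and that $e_He_{\hat 1} = e_{\hat 1}$ (this is where admissibility, guaranteeing that the cyclic subgroup fixing $H$ lies in $\mathcal{L}$, enters); once these are secured the rest is routine relation-chasing. Note that the naive map $\beta\mapsto\beta$ is not a section, since $\sigma^{m_H}-1-Q_s(\sigma) = Q_s(\sigma)(e_H-1)$ need not vanish in $\mathcal{C}_{\kk}(W,\mathcal{L})$, so the idempotent $e_{\hat 1}$ really is essential.
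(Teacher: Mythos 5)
Your argument is correct and follows essentially the same route as the paper: the surjection comes from the augmentation $e_\lambda\mapsto 1$, and the splitting from the non-unital morphism $b\mapsto be_{\hat 1}$, where your maximum $\hat 1$ is exactly the element $W\in\mathcal{L}$ that the paper exhibits as the join of the cyclic full reflection subgroups. You merely spell out in more detail the verifications (centrality of $e_{\hat 1}$, the identity $e_He_{\hat 1}=e_{\hat 1}$, and the factorization through the Hecke relations) that the paper leaves as ``easily checked''.
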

\begin{proof} The natural augmentation map $\eta : \kk \mathcal{L} \to \kk$ defined
by $e_{\la} \mapsto 1$ induces surjective morphisms of $\kk$-algebras
$\eta : \kk B \ltimes \kk \mathcal{L} \to \kk B$ and
$\mathcal{C}_{\kk}(W,\mathcal{L}) \to (\kk B)/\mathfrak{J}_H = H_{\kk}(W)$.
The splitting comes from the fact that the assumptions on $\mathcal{L}$
imply that $W$ belongs to $\mathcal{L}$, as the join of all the full cyclic subgroups.
Then, the non-unital algebra morphism $\kk B \to \mathcal{C}_{\kk}(W, \mathcal{L})$
defined by $b \mapsto b e_W$ is easily checked to factorize through $H_{\kk}(W)$
and to provide a splitting.
\end{proof}

Our first result is a structure theorem of the following form, where the $\kk \tilde{H}_{x_*}$ are slight generalizations
of the Hecke algebras attached to elements of $\mathcal{L}$ and $x_* \in \mathcal{L}$
is a representative of the orbit $X \in \mathcal{L}/W$.

\begin{theorem} There exists an isomorphism of $\kk$-algebras
$$
\mathcal{C}_{\kk}(W,\mathcal{L}) \simeq 
\bigoplus_{X \in \mathcal{L}/W} Mat_X(\kk \tilde{H}_{x_*}).
$$
\end{theorem}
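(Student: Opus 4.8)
The plan is to build the isomorphism from idempotents. Inside $\kk\mathcal{L}$ sits the Möbius algebra, and one can construct for each $\la \in \mathcal{L}$ the "primitive" idempotent $\varepsilon_\la = \sum_{\mu \geq \la} \mu(\la,\mu) e_\mu$ (Möbius inversion), so that $e_\la = \sum_{\mu \leq \la}\varepsilon_\mu$ and the $\varepsilon_\la$ form a complete system of orthogonal idempotents of $\kk\mathcal{L}$, permuted by $W$ exactly as $\mathcal{L}$ is. Hence $\sum_{\la \in X} \varepsilon_\la$ is a central-in-$\kk W$-... no: rather it is a $W$-stable idempotent, giving a two-sided decomposition $\mathcal{C}_\kk(W,\mathcal{L}) = \bigoplus_{X \in \mathcal{L}/W} \mathcal{C}_\kk(W,\mathcal{L})\,\big(\sum_{\la\in X}\varepsilon_\la\big)$, since these sums are central in $\kk B \ltimes \kk\mathcal{L}$ (they are $B$-invariant elements of the commutative algebra $\kk\mathcal{L}$, and the relations defining $\mathfrak{J}$ are compatible). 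This reduces the theorem to analysing a single orbit summand.

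**Recognising the matrix structure.** Fix an orbit $X$ with representative $x_* \in \mathcal{L}$, and let $\varepsilon = \varepsilon_{x_*}$, $P = \Ker(\text{stab}) $... concretely let $N_W(x_*)$ be the setwise stabiliser of $x_*$ in $W$ and $\bpi^{-1}(N_W(x_*)) \subseteq B$ its preimage. Then $|X| = [W : N_W(x_*)]$, and within the orbit summand the idempotents $\varepsilon_\la$, $\la \in X$, are pairwise conjugate by elements of $B$ (lift coset representatives of $W/N_W(x_*)$), so the summand is a matrix algebra $Mat_{|X|}(\varepsilon\,\mathcal{C}_\kk(W,\mathcal{L})\,\varepsilon)$ over the corner algebra. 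The remaining task is to identify $\varepsilon\,\mathcal{C}_\kk(W,\mathcal{L})\,\varepsilon$ with $\kk\tilde{H}_{x_*}$: elements of this corner are spanned by $\varepsilon b \varepsilon$ with $b \in B$, and $\varepsilon b \varepsilon \neq 0$ forces $\pi(b)$ to stabilise $x_*$; one shows the corner is a quotient of $\kk\big(\bpi^{-1}(N_W(x_*))\big)$, cut down by the Hecke-type relations inherited from $\mathfrak{J}$ — namely the braided-reflection relations for reflections lying in the subgroup $x_*$ (which survive) together with the relations making $\varepsilon_\mu = 0$ for $\mu \not\leq x_*$ inside the corner (which collapse the braided reflections outside $x_*$ to their image in a quotient of the reflection group $N_W(x_*)/x_*$). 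Matching this presentation with the definition of the slightly-generalised Hecke algebra $\kk\tilde{H}_{x_*}$ finishes the proof.

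**Where the difficulty lies.** The routine parts are the Möbius-inversion idempotent combinatorics and the general lemma that a ring with a set of orthogonal idempotents transitively permuted (up to conjugacy) by units is a matrix ring over a corner. The genuine obstacle is the precise identification of the corner algebra $\varepsilon\,\mathcal{C}_\kk(W,\mathcal{L})\,\varepsilon$: one must show it has exactly the expected dimension — i.e. that no unexpected collapsing or survival occurs among the $\varepsilon b \varepsilon$ — which amounts to controlling how the ideal $\mathfrak{J}$ interacts with the idempotent $\varepsilon$. I expect this to require a normal-form / spanning argument for $\mathcal{C}_\kk(W,\mathcal{L})$ (producing a basis indexed by $\mathcal{L} \times W$ or similar, presumably established earlier or alongside) and then checking that conjugating this basis by $\varepsilon$ and sorting by orbit yields precisely the matrix blocks of the right sizes, with the corner being free of the predicted rank over $\kk$. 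The compatibility of the braided-reflection relations with the stabiliser subgroup — ensuring $\bpi^{-1}(N_W(x_*))$ carries a well-defined "parabolic-like" braid structure whose Hecke quotient is $\tilde H_{x_*}$ — is the technical heart, and is exactly the point where the hypothesis that $\mathcal{L}$ is admissible (contains all cyclic subgroups, is $W$-stable) gets used in full force.
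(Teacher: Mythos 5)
Your skeleton is the same as the paper's: M\"obius inversion turns $\kk\mathcal{L}$ into a product of orthogonal idempotents $\eps_\la$ permuted by $W$, the orbit sums are central, and the Mackey--Wigner ``little groups'' argument turns each orbit block into a matrix algebra over the corner at $\eps_{x_*}$. Up to that point everything you say is correct. The genuine gap is exactly the one you flag yourself: the identification of the corner $\eps\,\mathcal{C}_\kk(W,\mathcal{L})\,\eps$ with $\kk\tilde H_{x_*}$, and the way you propose to close it -- a normal form or basis for $\mathcal{C}_\kk(W,\mathcal{L})$ indexed by $\mathcal{L}\times W$, followed by a dimension count -- cannot work here. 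Such a basis exists if and only if the BMR freeness conjecture holds for every $x\in\mathcal{L}$ (this equivalence is itself a corollary of the structure theorem in the paper, and the conjecture is still open for $G_{17}$, $G_{18}$, $G_{19}$), whereas the theorem you are proving is unconditional and is in fact the tool used to \emph{deduce} freeness statements. So your plan is either circular or weakens the result to a conditional one. A secondary imprecision: the relations coming from hyperplanes $H\notin x_*$ impose $\sigma^{m_H}=1$ in the corner, which does not collapse those braided reflections ``to their image in a quotient of the reflection group $N_W(x_*)/x_*$''; the resulting group $B_{x_*}$ is still an extension of $G_{x_*}/W_0$ by the braid group $B_0$ of $W_0$, not anything finite.

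The paper avoids the corner problem entirely by reversing the order of operations: it first establishes the isomorphism $\kk B\ltimes\kk\mathcal{L}\simeq\bigoplus_X Mat_X(\kk\hat B_{x_*})$ \emph{before} quotienting (here everything is a free module over group algebras, so no normal form is needed), and then computes the image of each generator $\sigma^m-1-Q_s(\sigma)e_H$ of $\mathfrak{J}$ under the explicit isomorphism $\theta$. The computation shows this image is a diagonal matrix whose $(x,x)$ entry is $\tau(x)^{-1}(\sigma^m-1)\tau(x)$ when $H\notin x$ and $\tau(x)^{-1}(\sigma^m-1-Q_s(\sigma))\tau(x)$ when $H\in x$. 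The elementary fact that the two-sided ideal of $Mat_N(A)$ generated by a family of matrices equals $Mat_N(I)$, with $I$ generated by all their entries, then gives $\theta(\mathfrak{J})=\bigoplus_X Mat_X(\mathfrak{J}_X)$, and the quotient $\kk\hat B_{x_*}/\mathfrak{J}_X$ is $\kk\tilde H_{x_*}$ \emph{by definition} of the generalized Hecke algebra -- no presentation-matching, no dimension count, no freeness hypothesis. If you want to salvage your route, you should replace your step three by this explicit ideal computation; the admissibility of $\mathcal{L}$ enters only through the observation that $H\in x$ implies $s^r.x=x$, which makes the relevant matrices diagonal.
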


When $\mathcal{L}$ is the lattice of the reflection subgroups of a finite Coxeter
group, the algebras $\mathcal{C}(W,\mathcal{L})$ were introduced in \cite{YH1}, under
the name $\mathcal{C}_W$ and
using a presentation by generators and relations, and proven to be generically
semisimple. 
When $W$ is the symmetric group, $\mathcal{C}_W$
coincides with the diagram algebra of braids and ties of Aicardi and Juyumaya
(see \cite{AICARDIJUYU,AICARDIJUYU2}).
Therefore the above theorem is a generalization of a theorem of Espinoza
and Ryom-Hansen (see \cite{ERH}), and was actually motivated by it. Note that, when
$W$ is the symmetric group, the lattice of parabolic subgroups coincides with
the lattice of reflection subgroups.

We now return to the general case.  
We let $K$ denote the field
of fractions of $R$ and $\bar{K}$ an algebraic closure of $K$. 
The \emph{BMR freeness conjecture} states that $H_{\kk}(W)$ is a free $\kk$-module
of rank $|W|$, and implies that $H_{\kk}(W)$ is generically semisimple. Up to extending
the ring of definition $R$ to a slightly larger Laurent polynomial ring $R_u$,
an additional conjecture of Broué-Malle-Michel, which we recall in detail in section \ref{sect:traces}, states that $H_{\kk}(W)$ is a symmetric
algebra when $\kk$ is a $R_u$-algebra, with a trace enjoying some uniqueness conditions. Of course both conjectures
are true when $W$ is a finite Coxeter group. 

When $\mathcal{L}$ is the lattice of parabolic subgroups of a finite complex reflection groups, the algebra $\mathcal{C}(W,\mathcal{L})$
was introduced and called $\mathcal{C}_W^p$ in \cite{YH1}. It was conjectured
there that $\mathcal{C}_W^p$ is a free $R$-module of rank $|W| \times |\mathcal{L}_p|$,
where $\mathcal{L}_p$ denotes the lattice of parabolic subgroups. A consequence
of the above theorem is then the following one.
We denote $W_{x_*}<W$ the stabilizer of $x_*$.

\begin{theorem} \label{theo:structtheoremBMRintro}
The algebra $\mathcal{C}_{\kk}(W,\mathcal{L})$
is a free $\kk$-module of
finite rank if and only if the BMR freeness conjecture holds over $\kk$ for every $x \in \mathcal{L}$
 (this is in particular the case when $W$
is a Coxeter group). In that case, its rank is $|W| \times |\mathcal{L}|$, and $\mathcal{C}_{\kk}(W,\mathcal{L})$ is semisimple when $\kk$ is an extension of $K$,
and 
$$\mathcal{C}_{\bar{K}}(W,\mathcal{L}) \simeq \bar{K} W \ltimes \bar{K} \mathcal{L}
\simeq \bigoplus_{X \in \mathcal{L}/W} Mat_X(\bar{K} W_{x_*}).
$$
\end{theorem}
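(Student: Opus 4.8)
The plan is to deduce everything from the structure Theorem, which gives $\mathcal{C}_{\kk}(W,\mathcal{L}) \simeq \bigoplus_{X \in \mathcal{L}/W} Mat_X(\kk \tilde H_{x_*})$. The first step is to identify the $\kk$-rank of each factor $\kk \tilde H_{x_*}$. Since the $\kk \tilde H_{x_*}$ are described in the excerpt as slight generalizations of the Hecke algebra attached to the reflection subgroup $W_{x_*}$, I would check that $\kk\tilde H_{x_*}$ is a free $\kk$-module precisely when the BMR freeness conjecture holds for $W_{x_*}$ over $\kk$, and that in the free case its rank equals $|W_{x_*}|$; the extra idempotents contributed by the lattice structure below $x_*$ do not change the rank because they sit inside the subalgebra generated by the $e_\lambda$ with $\lambda \le x_*$, which is already a free $\kk$-module of rank equal to the number of such $\lambda$ — wait, more carefully: the ``generalization'' amounts to tensoring the Hecke algebra of $W_{x_*}$ with the Möbius algebra of the interval below $x_*$, and one must verify that the relevant count still comes out to $|W_{x_*}|$ times the size of that interval, which when summed over orbits reassembles into $|W|\times|\mathcal{L}|$. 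I expect this bookkeeping — getting the rank of $\kk\tilde H_{x_*}$ exactly right and confirming that $\sum_{X} |\mathcal{L}/W|$-indexed contributions telescope to $|W|\times|\mathcal{L}|$ — to be the main technical point, and it is essentially an orbit-counting identity: $\sum_{X \in \mathcal{L}/W}(\dim_{\kk}Mat_X)\cdot(\text{rank of }\kk\tilde H_{x_*}) = |W|\times|\mathcal{L}|$, where the matrix size $\dim Mat_X$ is $|W/W_{x_*}|$ and the product $|W/W_{x_*}|\cdot|$interval below $x_*| \cdot |W_{x_*}| \cdot (\dots)$ must be summed; I would use the orbit–stabilizer theorem and the $W$-stability of $\mathcal{L}$ to reduce this to $\sum_{\la \in \mathcal{L}} |W| = |W|\times|\mathcal{L}|$.

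For the equivalence ``free iff BMR holds for all $x \in \mathcal{L}$'': the ``if'' direction follows from the rank computation above, since a finite direct sum of matrix algebras over free $\kk$-modules of finite rank is free of finite rank. For the ``only if'' direction, I would use the split surjection $\mathcal{C}_{\kk}(W,\mathcal{L}) \to H_{\kk}(W)$ from the Proposition together with analogous split surjections onto the $\kk\tilde H_{x_*}$ (obtained by projecting onto a single matrix block and then composing with an augmentation of the Möbius part) to realize each Hecke algebra $H_{\kk}(W_{x_*})$ as a direct summand, as a $\kk$-module, of $\mathcal{C}_{\kk}(W,\mathcal{L})$; a direct summand of a free module of finite rank over a commutative ring is finitely generated projective, and one then invokes the standard fact that $H_{\kk}(W_{x_*})$ is generated by $|W_{x_*}|$ elements (the images of a section of $\pi$), so that being projective of the expected rank forces freeness — here one should be slightly careful and instead argue directly that the matrix block being free forces $\kk \tilde H_{x_*}$ free, hence $H_{\kk}(W_{x_*})$ free. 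The parenthetical ``in particular the case when $W$ is a Coxeter group'' is immediate since every $W_{x_*}$ is then again a Coxeter group and the Iwahori–Hecke algebra is free of rank $|W_{x_*}|$.

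For the semisimplicity and the final displayed isomorphism over $\bar K$: when $\kk$ is an extension of $K$, each $\kk\tilde H_{x_*}$ is, by the BMR (now theorem, over $K$) plus the generic semisimplicity it implies, a semisimple algebra, hence so is each $Mat_X(\kk\tilde H_{x_*})$ and so is the finite direct sum — giving semisimplicity of $\mathcal{C}_{\kk}(W,\mathcal{L})$. Over $\bar K$ the Hecke algebra $\bar K H(W_{x_*})$ is split semisimple and, by Tits' deformation argument, isomorphic to the group algebra $\bar K W_{x_*}$; feeding this into Theorem gives $\mathcal{C}_{\bar K}(W,\mathcal{L}) \simeq \bigoplus_{X} Mat_X(\bar K W_{x_*})$, where I must check the Möbius-algebra decoration in $\tilde H_{x_*}$ degenerates, over $\bar K$, exactly to the contribution recorded on the right-hand side. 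Finally, to get the middle term $\bar K W \ltimes \bar K \mathcal{L}$, I would apply the same structure theorem with the Hecke parameters specialized so that $H_{\bar K}(W) = \bar K W$ (i.e. $\mathcal{C}$ specialized to $\bar K W \ltimes \bar K \mathcal{L}$, which is the ``$a_{H,i}$ at the group-algebra values'' case), obtaining the identical block decomposition $\bigoplus_X Mat_X(\bar K W_{x_*})$; since both $\mathcal{C}_{\bar K}(W,\mathcal{L})$ and $\bar K W \ltimes \bar K \mathcal{L}$ have the same decomposition into matrix algebras over group algebras, they are isomorphic. The only real subtlety I anticipate here is making the comparison ``$\tilde H_{x_*}$ over $\bar K$ $=$ $\bar K W_{x_*}$'' fully precise, i.e. pinning down exactly what the generalization $\tilde H_{x_*}$ is and checking its semisimple form — but this is controlled entirely by the already-granted structure Theorem and standard Tits deformation.
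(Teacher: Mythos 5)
Your overall strategy coincides with the paper's: everything is read off from the structure theorem, the rank is computed block by block via orbit--stabilizer, and the semisimple form over $\bar K$ comes from specializing to the group-algebra case and applying Maschke plus Tits' deformation. However, there is one genuine gap, and it sits exactly at the point you yourself flag as ``the main technical point'': your description of $\kk\tilde H_{x_*}$ is wrong. The generalized Hecke algebra $\tilde H_{x_*}$ is \emph{not} the Hecke algebra of the stabilizer $W_{x_*}$, nor is it a Hecke algebra tensored with the M\"obius algebra of the interval below $x_*$. The M\"obius algebra contributes nothing at this stage: it is entirely absorbed into the matrix units $E_{x,x}$ of the block $Mat_X(-)$ when one passes from $\kk\mathcal{L}$ to $\kk^{\mathcal L}$ and applies the little-groups isomorphism. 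What $\tilde H_{x_*}$ actually is, and what the proof needs, is the following: writing $W_0$ for the full reflection subgroup attached to $x_*$ and $G_{x_*}=W_{x_*}$ for its stabilizer (so $W_0\trianglelefteq G_{x_*}$, and these are in general different groups), $\tilde H_{x_*}$ is a quotient of the group algebra of a generalized braid group $B_{x_*}$ fitting in $1\to B_0\to B_{x_*}\to G_{x_*}/W_0\to 1$, and one shows it is a \emph{free module of rank $[G_{x_*}:W_0]$ over the ordinary Hecke algebra $H_0$ of $W_0$} (because the defining ideal decomposes as $\bigoplus_i b_i\mathfrak h_0$ over coset representatives). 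Hence $\tilde H_{x_*}$ is $\kk$-free of rank $|G_{x_*}|$ exactly when $H_0$ is $\kk$-free of rank $|W_0|$, i.e.\ when BMR holds for the \emph{reflection subgroup} $W_0$ --- which is what ``BMR for $x$'' means in the statement. Your version conflates the stabilizer with the reflection subgroup (the stabilizer need not be a reflection group, so ``$H_{\kk}(W_{x_*})$'' does not even make sense as a Hecke algebra), and your proposed count $|W/W_{x_*}|\cdot|\text{interval below }x_*|\cdot|W_{x_*}|\cdots$ carries a spurious interval factor that would not telescope to $|W|\times|\mathcal{L}|$; the correct count is simply $\sum_X |X|^2\cdot|W_{x_*}| = \sum_X |X|\cdot|W| = |W|\,|\mathcal{L}|$.

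The remaining pieces of your plan are sound and match the paper: the ``if'' direction of freeness is immediate from the block decomposition; the ``only if'' direction uses that $H_0$ sits as a direct summand $b_1H_0$ inside $\tilde H_{x_*}$ (the paper is no more detailed than you are here); and the final isomorphism over $\bar K$ is obtained, as you propose, by specializing the parameters so that $\mathcal{C}$ degenerates to $\Q W\ltimes\Q\mathcal{L}\simeq\bigoplus_X Mat_X(\Q W_{x_*})$, invoking Maschke, and then Tits' deformation --- with the caveat that the deformation argument identifies $\bar K\tilde H_{x_*}$ with the group algebra $\bar K W_{x_*}$ of the \emph{stabilizer}, which is consistent only once $\tilde H_{x_*}$ has been correctly identified as a deformation of $\kk G_{x_*}$ rather than of a Hecke algebra decorated by an interval.
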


The BMR freeness conjecture is now proved for all irreducible reflection groups
but the ones of Shephard-Todd types $G_{17}$, $G_{18}$ and $G_{19}$ (see \cite{ARIKI,
ARIKIKOIKE,CYCLO,HECKECUBIQUE,MARINPFEIFFER,CHAVLITHESE,CHAVLICRAS,CB3}),
therefore the above statement is actually almost unconditional, and reduces
the proof of conjecture 5.10 in \cite{YH1} to the original BMR freeness conjecture.

We finally (conditionnally) prove that these algebras are symmetric algebras. We call 
\emph{strong
freeness conjecture} for $W$ the statement that $H_R(W)$ admits a basis
originating from elements of $B$. It turns out that the status of this conjecture
is exactly the same as the original BMR freeness conjecture : for every group for
which the BMR freeness conjecture has been proved so far, the proof provides
a convenient basis.

\begin{theorem} Assume that the strong freeness conjecture as well as the Brou\'e-Malle-Michel trace conjecture holds for all $x \in \mathcal{L}$. This is in particular the case
if $W$ is a finite Coxeter group. Then, for any commutative $R_u$-algebra $\kk$, the algebra $\mathcal{C}_{\kk}(W,\mathcal{L})$
is a symmetric algebra.
\end{theorem}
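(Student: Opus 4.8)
The plan is to propagate the Brou\'e--Malle--Michel symmetrizing trace through the structure theorem. Recall that a $\kk$-algebra $S$ is symmetric when it carries a $\kk$-linear form $t\colon S\to\kk$ whose associated bilinear form $(a,b)\mapsto t(ab)$ is nondegenerate. I would begin by recording two elementary permanence properties: a finite direct product of symmetric $\kk$-algebras is symmetric (take the orthogonal sum of the symmetrizing forms of the factors), and for any symmetric $\kk$-algebra $A$ the matrix algebra $\mathrm{Mat}_n(A)$ is symmetric, with form $M\mapsto t_A\big(\sum_i M_{ii}\big)$; the same holds for the twisted matrix algebras $\mathrm{Mat}_X$ occurring in the structure theorem, which are full matrix rings over $\kk\tilde{H}_{x_*}$ with rows and columns indexed by the orbit $X$. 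Combined with the isomorphism $\mathcal{C}_{\kk}(W,\mathcal{L})\simeq\bigoplus_{X\in\mathcal{L}/W}\mathrm{Mat}_X(\kk\tilde{H}_{x_*})$, this reduces the theorem to showing that each $\kk\tilde{H}_{x_*}$ is a symmetric $\kk$-algebra.

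Next I would unwind the definition of $\tilde{H}_{x_*}$, which (from the section where it is introduced) is obtained from the Brou\'e--Malle--Rouquier Hecke algebra $H_{\kk}(x_*)$ of the reflection subgroup $x_*\in\mathcal{L}$ by a crossed-product construction with the finite group $N_{x_*}=W_{x_*}/x_*$, the latter acting on $H_{\kk}(x_*)$ by $\kk$-algebra automorphisms induced from the conjugation action of the stabilizer $W_{x_*}=N_W(x_*)$ on $x_*$ (this preserves the Hecke relations because of the convention $a_{H,i}=a_{w(H),i}$); in general the crossed product is twisted by the class of the extension $1\to x_*\to W_{x_*}\to N_{x_*}\to 1$. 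The strong freeness conjecture for $x_*$ provides a basis $(b_w)_{w\in x_*}$ of $H_{\kk}(x_*)$ with each $b_w$ lifting $w$ from the braid group, and the Brou\'e--Malle--Michel trace conjecture for $x_*$ then provides the canonical symmetrizing trace $t_{x_*}\colon H_{\kk}(x_*)\to\kk$, characterized among $\kk$-linear traces by $t_{x_*}(b_1)=1$, $t_{x_*}(b_w)=0$ for $w\neq1$, and the further BMM normalization conditions.

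The core of the argument is to lift $t_{x_*}$ to the crossed product. Given a symmetric $\kk$-algebra $A$ with symmetrizing form $t_A$ and a finite group $G$ acting on $A$ by automorphisms \emph{preserving} $t_A$, I would check that the $\kk$-linear form on the skew group algebra $A\rtimes G$ sending $\sum_{g\in G}a_g u_g\mapsto t_A(a_1)$ (with $u_g$ the invertible generators, $a_1$ the coefficient of $1\in G$) is symmetrizing: symmetry $t(xy)=t(yx)$ is a short computation from the $G$-invariance and the trace identity for $t_A$, while nondegeneracy holds because, up to the $G$-action, $\{u_{g^{-1}}b_i'\}$ is dual to $\{b_i u_g\}$ for $(b_i')$ a $t_A$-dual basis of $A$; the cocycle-twisted variant goes through identically (it models the group algebra $\kk W_{x_*}$, which is symmetric). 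Thus everything reduces to the invariance of $t_{x_*}$ under $N_{x_*}$, which I would deduce from the \emph{uniqueness} clause of the BMM trace conjecture: for $\varphi\in N_{x_*}$ the trace $t_{x_*}\circ\varphi$ again satisfies the characterizing conditions, since $\varphi$ is induced by an element of $W$ and the BMR/BMM data (braided reflections, parameters $a_{H,i}$, canonical basis) is set up $W$-equivariantly, whence $t_{x_*}\circ\varphi=t_{x_*}$.

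The main obstacle is precisely this invariance, together with the cocycle bookkeeping: one must identify which automorphisms of $H_{\kk}(x_*)$ the elements of $N_{x_*}$ realize, verify that they permute the canonical basis compatibly with the BMM normalization (in particular fixing $b_1$), and confirm that the induced-trace construction still yields a nondegenerate form when the extension $1\to x_*\to W_{x_*}\to N_{x_*}\to1$ fails to split. When $W$ is a finite Coxeter group the strong freeness and BMM trace conjectures are theorems for every reflection subgroup $x\in\mathcal{L}$, so the statement is then unconditional.
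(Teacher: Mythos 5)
Your overall strategy coincides with the paper's: reduce via the structure theorem and the matrix-trace lemma to showing that each $\kk\tilde{H}_{x_*}$ is symmetric, extend the canonical Brou\'e--Malle--Michel trace $t$ of the Hecke algebra $H_0$ of the reflection subgroup by zero on the cosets $b_iH_0$ with $b_i\neq 1$, deduce the trace property from the conjugation-invariance of $t$ (itself obtained from the BMM uniqueness statement), and get nondegeneracy from the dual bases $\{b_ie_j\}$ and $\{e'_jb_i^{-1}\}$. The crossed-product formalism and the cocycle worries are unnecessary overhead: the paper simply works with the decomposition $\tilde{H}_x=\bigoplus_i b_iH_0$ and checks the trace identity and the dual bases by hand, which is exactly what your computation amounts to.

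The one genuine gap is in your justification of the invariance $t\circ\mathrm{Ad}(b)=t$ for $b$ a lift of a normalizer element. Invoking uniqueness is the right move, but to apply it you must verify that $t\circ\mathrm{Ad}(b)$ satisfies all three characterizing conditions, and condition (3) reads $\overline{t(\mathbf{a}(h))}\,t(\bpi_0)=t(h\bpi_0)$, where $\bpi_0$ is the canonical central element of the pure braid group $P_0$ of $W_0$. Checking this condition for the conjugated trace requires $b\bpi_0b^{-1}=\bpi_0$ inside $B_x$. This is a statement about braid groups, not about $W$, and it does not follow from the $W$-equivariance of the parameters, of the braided reflections, or of the basis: a priori $b\bpi_0b^{-1}$ could differ from $\bpi_0$ by a nontrivial element of $P_0$. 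The paper devotes a separate topological argument to precisely this point (in the section on braid groups of reflection subgroups): it realizes $\bpi_0$ as the loop $t\mapsto x_{00}\exp(2\ii\pi t)$ in $X_0\subset L^{\perp}$, transports it into $X^0=\C^n\setminus\bigcup\mathcal{A}_L$, and exhibits an explicit homotopy showing that conjugation by any path $x_0\leadsto g.x_0$ in $X$ fixes its class modulo the kernel $K$. Without this centrality statement (or an equivalent substitute), the uniqueness argument cannot be run; the rest of your outline is sound and matches the paper.
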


As an immediate corollary, we get that the diagram algebra of `braids and ties' is
a symmetric algebra as well.

\medskip

{\bf Acknowledgements.} I thank J.-Y. H\'ee and S. Bouc for discussions about
root systems and lattices. I thank M. Calvez, A. Navas and J. Juyumaya for
their invitation at the SUMA'16 conference in Valparaiso, where the original idea
of this paper emerged.

\section{Structure}

\subsection{Semidirect extensions of group algebras by abelian algebras}
\label{sect:prelimalgebras}

In this section, we first expose fairly general results, which are basically folklore,
and which are needed in the sequel. To start with, the following proposition is an explicit version of what is known
in the realm of the representation theory of finite groups as Mackey-Wigner's
method of ``little groups'' (see \cite{SERRE} \S 8.2). It can be seen as an
explicit Morita equivalence (see \cite{CABANESENGUEHARD} ex. 18.6). It is stated
and proved in detail in \cite{CABANESMARIN}, proposition 3.4, in the case $G$ is finite.
We explain below the additional arguments which are needed in the general case.

\begin{proposition} \label{prop:wigner}
Let $G$ be a group acting transitively (on the left) on a finite set $X$. Let $\kk$ be a commutative ring with $1$, and let $A$ be the 
$\kk$-algebra $G\ltimes \kk^X$ where $\kk^X=\oplus_{x\in X}\kk\epsilon_x$ is endowed with the product law
($\epsilon_x\epsilon_{x'}=\delta_{x,x'}\epsilon_x$) and the action of $G$ is induced by the one on $X$. Then any choice 
of $x_*\in X$ with stabilizer $G_0\subseteq G$ and any choice of a ``section" $\tau \colon X\to G$ such that $\tau(x).x_*=x$ for all $x\in X$, 
define a unique isomorphism $$ \theta : A\longrightarrow {\rm Mat}_X(\kk G_0)$$ sending each $\epsilon_{x}\in \kk^X$ ($x\in X$) to 
$\theta (\epsilon_x):=E_{x,x}$, and each $g\in G$ to $$\theta (g):=\sum_{x\in X}\tau(gx)^{-1}g.\tau(x) E_{gx,x}$$ 
(where $E_{x,y}\in{\rm Mat}_X(\kk )$ is the elementary matrix corresponding to $x,y\in X$).
\end{proposition}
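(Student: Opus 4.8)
The plan is to verify that the two assignments define a homomorphism of $\kk$-algebras $\theta\colon A\to\mathrm{Mat}_X(\kk G_0)$, and then to prove that it is bijective directly from the formula, since for infinite $G$ one cannot compare $\kk$-ranks as in the finite case treated in the cited references. Note that uniqueness of $\theta$ with the prescribed values is automatic once well-definedness is known, because $G$ and the $\epsilon_x$ generate $A$.

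First I would check that $\tau(gx)^{-1}g\tau(x)\in G_0$: applying this element to $x_*$ and using $\tau(y).x_*=y$ shows it fixes $x_*$, so $\theta(g)$ indeed has entries in $\kk G_0$. Then I would verify that the defining relations of $A=G\ltimes\kk^X$ are respected: $\theta(\epsilon_x)\theta(\epsilon_{x'})=\delta_{x,x'}\theta(\epsilon_x)$ and $\sum_x\theta(\epsilon_x)=1$ come directly from $E_{x,x}E_{x',x'}=\delta_{x,x'}E_{x,x}$; multiplicativity $\theta(g)\theta(h)=\theta(gh)$ follows from $E_{gx,x}E_{hy,y}=\delta_{x,hy}E_{(gh)y,y}$ together with the telescoping $\tau(g(hy))^{-1}g\tau(hy)\cdot\tau(hy)^{-1}h\tau(y)=\tau((gh)y)^{-1}(gh)\tau(y)$; and the crossed relation $\theta(g)\theta(\epsilon_x)=\theta(\epsilon_{gx})\theta(g)$ follows by right-multiplying the formula for $\theta(g)$ by $E_{x,x}$ and left-multiplying it by $E_{gx,gx}$, both giving $\tau(gx)^{-1}g\tau(x)E_{gx,x}$. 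Since $\theta(1)=\sum_x E_{x,x}$ is the identity matrix, this yields a unital algebra homomorphism.

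For injectivity I would use that $A$ is free over $\kk$ on $\{g\epsilon_x:g\in G,\ x\in X\}$ --- this is where finiteness of $G$ is genuinely dispensed with: for $a=\sum c_{g,x}g\epsilon_x$ the $(y,x)$-entry of $\theta(a)$ is $\sum_{g:\,gx=y}c_{g,x}\,\tau(y)^{-1}g\tau(x)$, a $\kk$-combination of pairwise distinct elements of $G$, so $\theta(a)=0$ forces all $c_{g,x}=0$. For surjectivity, let $B$ be the image, a subalgebra containing all $E_{x,x}=\theta(\epsilon_x)$. Computing $\theta(\tau(y))E_{x_*,x_*}$ and $E_{x_*,x_*}\theta(\tau(y)^{-1})$ and simplifying the $\tau$-factors gives $\tau(x_*)E_{y,x_*}\in B$ and $\tau(x_*)^{-1}E_{x_*,y}\in B$ for every $y$, whose products recover all matrix units $E_{y,z}$. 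Finally $\theta(g_0)E_{x_*,x_*}=\tau(x_*)^{-1}g_0\tau(x_*)E_{x_*,x_*}$ for $g_0\in G_0$, and since $\tau(x_*)\in G_0$ this conjugate runs over all of $G_0$ as $g_0$ does, so $cE_{x_*,x_*}\in B$ for all $c\in\kk G_0$; conjugating by the matrix units already in $B$ gives $cE_{y,z}\in B$ for all $y,z$, i.e. $B=\mathrm{Mat}_X(\kk G_0)$.

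The relation checks are pure bookkeeping of the $\tau$-factors, so I expect no real difficulty there. The genuine point --- the ``additional argument'' for infinite $G$ --- is that the finite-dimensional rank count is unavailable, so injectivity and surjectivity must both be extracted from the explicit formula as above; the one subtlety to watch is that $\tau(x_*)$ need not equal $1$, so one must carry the conjugation by $\tau(x_*)$ (which is an automorphism of $\kk G_0$) through the surjectivity argument rather than assume it away. I do not anticipate any obstacle beyond this.
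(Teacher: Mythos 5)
Your proposal is correct and its essential content coincides with the paper's: the injectivity argument (grouping the basis elements $g\epsilon_x$ by the pair $(gx,x)$ and observing that the resulting entries are $\kk$-combinations of pairwise distinct elements of $G_0$) is exactly the argument given in the paper, which is the only part the paper proves directly. The remaining verifications — that $\theta$ is a well-defined unital homomorphism and that it is surjective via the matrix units $\tau(x_*)E_{y,x_*}$ and $\tau(x_*)^{-1}E_{x_*,y}$ — are delegated by the paper to proposition 3.4 of the cited reference, and your explicit treatment of them (including the care taken with $\tau(x_*)\in G_0$ possibly being nontrivial) is accurate.
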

\begin{proof}
The proof given in proposition 3.4 of \cite{CABANESMARIN} that
$\theta$ is a surjective morphism does not use any finiteness assumption on $G$. It
therefore remains to prove that $\theta$ is injective. We prove this directly
as follows. A $\kk$-basis of $A$ is given by the $g \eps_{\alpha}$ for $g \in G$
and $\alpha \in X$, and by definition
$$
\theta(g \eps_{\alpha}) = \sum_{x \in X} \tau(gx)^{-1}g\tau(x) E_{g.x,x} E_{\alpha,\alpha}
= \tau(g\alpha)^{-1}g\tau(\alpha) E_{g\alpha,\alpha}.
$$
It follows that a general linear combination $\sum_{g, \alpha} \la_{g,\alpha} g\eps_{\alpha}$
belongs to $\Ker \theta$ iff 
$$
0 = \sum_{g,\alpha} \la_{g,\alpha} \tau(g\alpha)^{-1}g\tau(\alpha)E_{g\alpha,\alpha}
$$
which means that, for all $\alpha \in X$,
$$
\sum_{g \in G} \la_{g,\alpha} \tau(g \alpha)^{-1}g\tau(\alpha)E_{g\alpha,\alpha} = 0.
$$
Let us fix such an $\alpha \in X$. For every $\beta \in X$ we have
$$
0 = \sum_{g \ | \ g.\alpha = \beta } \la_{g,\alpha} \tau(g \alpha)^{-1}g\tau(\alpha)E_{g\alpha,\alpha} 
$$
namely
$$
0 = \tau(\beta)^{-1}\left(\sum_{g \ | \ g.\alpha = \beta } \la_{g,\alpha} g\right) \tau(\alpha)
$$
which implies that, for all $g \in G$, we have $\la_{g,\alpha} = 0$. Since this
holds for every $\alpha \in X$ we get the conclusion.
\end{proof}

Let $L$ be a join semilattice. That is, we have a finite partially ordered set $L$
for which there exists a least upper bound $x \vee y$ for every two $x,y \in L$.
Let $M$ be the semigroup with elements $e_{\la}, \la \in L$ and product law
$e_{\la} e_{\mu} = e_{\la \vee \mu}$. 
Such a semigroup is sometimes called a band.

If $L$ is acted upon by some group $G$ in an order-preserving way (that is $x \leq y \Rightarrow g.x \leq g.y$ for all $x,y \in L$ and $g \in G$)
then $M$ is acted upon by $G$, so that we can form the algebra $ \kk M \rtimes \kk G$. Up
to exchanging meet and join, the algebra $\kk M$ is the Möbius algebra as in \cite{STANLEY}, definition 3.9.1. We recall from \cite{YH1} a $G$-equivariant version
of the classical isomorphism $\kk M \simeq \kk^L$ of e.g. \cite{STANLEY}, theorem 3.9.2.
Here $\kk^{L}$ is the algebra of $\kk$-valued functions on $L$, that is the direct product of a collection indexed by the elements of $L$ of copies of the $\kk$-algebra $\kk$. As before,
to $\la \in L$ we associate $\eps_{\la} \in \kk^{L}$ defined by $\eps_{\la}(\la') = \delta_{\la,\la'}$
if $\la' \in L$.

\begin{proposition}  (see \cite{YH1}, proposition 3.9) \label{prop:isomobius} Let $M$ be the band associated to a 
finite join semilattice $L$. For every commutative ring $\kk$, the
semigroup algebra $\kk M$ is isomorphic to $\kk^{L}$.
If $L$ is acted upon
by some group $G$ as above, then $\kk M \rtimes \kk G \simeq \kk^{L} \rtimes \kk G$,
the isomorphism being given by $g \mapsto g$ for $g \in G$ and
$e_{\la} \mapsto \sum_{\la \leq \mu} \eps_{\mu}$.
\end{proposition}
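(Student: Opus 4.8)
The plan is to write down the claimed map explicitly, check that it is an algebra morphism, establish its bijectivity by a triangularity ($\mathrm{M\ddot obius}$ inversion) argument valid over an arbitrary ground ring, and finally verify $G$-equivariance and lift everything to the semidirect products. Concretely, I would let $\theta\colon \kk M \to \kk^{L}$ be the $\kk$-linear map determined on the basis by $\theta(e_\la) = \sum_{\la \leq \mu} \eps_\mu$. That $\theta$ is multiplicative is then a one-line check: using $\eps_\nu\eps_{\nu'} = \delta_{\nu,\nu'}\eps_\nu$ together with the defining property of the least upper bound ($\la \leq \nu$ and $\mu \leq \nu$ if and only if $\la\vee\mu \leq \nu$), one gets
\[
\theta(e_\la)\theta(e_\mu) = \sum_{\la \leq \nu,\ \mu \leq \nu}\eps_\nu = \sum_{\la\vee\mu \leq \nu}\eps_\nu = \theta(e_{\la\vee\mu}) = \theta(e_\la e_\mu),
\]
and, if $L$ has a least element $\hat 0$, $\theta(e_{\hat 0}) = \sum_\mu \eps_\mu$ is the unit of $\kk^{L}$.

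Next I would prove that $\theta$ is bijective. Its matrix in the bases $(e_\la)_{\la\in L}$ and $(\eps_\mu)_{\mu \in L}$ is the incidence (``zeta'') matrix $Z$ of the poset $L$, with $Z_{\la,\mu} = 1$ if $\la\leq\mu$ and $Z_{\la,\mu}=0$ otherwise. Reindexing $L$ along a linear extension of its order turns $Z$ into an upper unitriangular integer matrix, hence an invertible one over $\Z$ and therefore over any commutative ring $\kk$; its inverse is the $\mathrm{M\ddot obius}$ matrix of $L$. So $\theta$ sends a $\kk$-basis to a $\kk$-basis and, being an algebra morphism, is an isomorphism of $\kk$-algebras. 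This is just the $\kk$-coefficient form of \cite{STANLEY}, theorem 3.9.2.

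Finally, suppose $G$ acts on $L$ preserving the order. Since each $g\in G$ acts by an order automorphism one has $g.\la \leq \mu$ if and only if $\la \leq g^{-1}.\mu$, whence $\theta(g.e_\la) = \sum_{g.\la \leq \mu}\eps_\mu = \sum_{\la \leq g^{-1}.\mu}\eps_\mu = \sum_{\la \leq \nu}\eps_{g.\nu} = g.\theta(e_\la)$, so $\theta$ is $G$-equivariant. A $G$-equivariant isomorphism of $\kk$-algebras $\kk M \to \kk^{L}$ lifts uniquely to an isomorphism $\kk M \rtimes \kk G \to \kk^{L}\rtimes \kk G$ acting as $\theta$ on $\kk M$ and as the identity on $\kk G$, because the crossed-product relations $g\cdot a = (g.a)\cdot g$ are respected exactly when $\theta$ intertwines the two $G$-actions; this gives the displayed isomorphism of the statement. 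I do not expect a real obstacle here: the only genuinely non-formal point is the bijectivity of $\theta$ over an arbitrary $\kk$, where a dimension count is unavailable and one must instead invoke the integral inverse $Z^{-1}$ coming from $\mathrm{M\ddot obius}$ inversion; the rest is routine verification.
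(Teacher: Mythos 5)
Your proof is correct and follows essentially the same route as the source the paper cites (the classical M\"obius-algebra isomorphism of \cite{STANLEY}, Theorem 3.9.2, made $G$-equivariant as in \cite{YH1}, Proposition 3.9): multiplicativity via the universal property of the join, bijectivity via unitriangularity of the zeta matrix along a linear extension, and the formal lift to the semidirect products. Nothing to add.
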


By decomposing $L$ as a disjoint union of $G$-orbits, by combining these
two results one gets that $\kk M \rtimes \kk G$ is isomorphic to
a direct sum of $|L/G|$ matrix algebras. This will turn out to be the main result
from general algebra that is needed to prove our structure theorem.

\subsection{Braid groups of reflection subgroups}
\label{sect:braidsubgroups}

\bigskip

Let $W_0$ be a reflection subgroup of the reflection group $W$,
and $G$ a subgroup with $W_0 < G < W$ normalizing $W_0$. 
For convenience we endow $\C^n$ with a $W$-invariant unitary form.

The hyperplane complement associated to $W$ is denoted $X = \C^n \setminus \bigcup
\mathcal{A}$, and we let $x_0 \in X$ denote the chosen base-point,
so that $B = \pi_1(X/W,x_0)$.
Let $L \subset \C^n$ denote the fixed points set of $W_0$,
namely the intersection of the set $\mathcal{A}_L$ of all the reflecting hyperplanes associated to the reflections in $W_0$. 
Since $G$ normalizes $W_0$ we have $g(L) = L$ for all $g \in G$.
We let $X_0 \subset L^{\perp}$ denote the hyperplane complement associated to $W_0$ viewed as a reflection subgroup acting on $L^{\perp}$.  We have $X_0 = L^{\perp} \setminus \bigcup \mathcal{A}_L$.

Let $X^0 = \C^n \setminus \bigcup \mathcal{A}_L$, and $x_{00}$ the orthogonal
projection of $x_0$ on $L^{\perp}$. We write $x_0 = x_1 + x_{00}$, with $x_1 \in L$. Since $x_0 \not\in \bigcup \mathcal{A}_L$
we have $x_{00} \in X_0$, and the braid groups of $W_0 < \GL(L^{\perp})$
can be defined as $P_0 = \pi_1(X_0,x_{00})$, $B_0 = \pi_1(X_0/W_0,x_{00})$.
The inclusion map $(X_0,x_{00}) \subset (X^0,x_{00}+L)$ is a $W_0$-equivariant deformation
retract through $(z,t) \mapsto z_{L^{\perp}}+tz_L$ where $z_{L^{\perp}}$ and $z_L$
denote the orthogonal projections of $z$ on $L^{\perp}$ and $L$, respectively.
Since $x_{00}+L$ is retractable to $x_0$, it follows that
this inclusion provides an isomorphism $P_0 \simeq \pi_1(X^0,x_0)$ and, because
of $W_0$-equivariance, an isomorphism $B_0 \simeq \pi_1(X^0/W_0,x_0)$.

Since $W_0$ is normal inside $G$, the projection map $X/W_0 \to X/G$ is a Galois
covering, and we get a short exact sequence $1 \to \pi_1(X/W_0,x_0) \to \pi_1(X/G,x_0) \to
G/W_0 \to 1$.

We consider the $G$-equivariant inclusion $(X,x_0) \subset (X^0,x_0)$. By standard
arguments (see e.g. \cite{COGOLLUDO} proposition 2.2, or \cite{BESSIS}) we know that
the induced map $P = \pi_1(X,x_0) \to \pi_1(X^0,x_0)$ is surjective,
and that its kernel $K$ is normally generated by the meridians around
the hyperplanes in $\mathcal{A}_L^c$. Since the following diagram is commutative
$$\xymatrix{
 & & 1 \ar[d] & 1 \ar[d] \\
 1 \ar[r] & K \ar@{=}[d] \ar[r] & \pi_1(X,x_0) \ar[d] \ar[r] & \pi_1(X^0,x_0) \ar[d] \ar[r] & 1 \\
 1 \ar[r] & K \ar[r] & \pi_1(X/W_0,x_0) \ar[d] \ar[r] & \pi_1(X^0/W_0,x_0) \ar[d] \ar[r] & 1 \\
  & & W_0 \ar[d] \ar@{=}[r] & W_0 \ar[d] \\
  &  & 1 & 1 
}
$$
with the two columns and the top row being short exact sequences, it follows that the second
row is exact and $K = \Ker(\pi_1(X/W_0,x_0) \to \pi_1(X^0/W_0,x_0))$.
Inside $\pi_1(X/W_0,x_0)$, the collection of meridians generating $K$
become the collection of the elements $\sigma^{m_{\sigma}}$ where
$\sigma$ runs among the collection of (distinguished) braided reflections around
the hyperplanes in $\mathcal{A}_L^c$ and $m_{\sigma}$ is the order of their image in 
$W_0 \subset W$.

Since $G$ stabilizes $\mathcal{A}_L$, the image of $K$ under the injective map $\pi_1(X/W_0,x_0) \to \pi_1(X/G,x_0)$ is a normal subgroup of $\pi_1(X/G,x_0)$, that
we still denote $K$. We define the \emph{generalized braid group associated to $G$} and denote $B_G$ the quotient group $\pi_1(X/G,x_0)/K$.

Let us consider the projection map $\pi : B \to W$.
By the above description, $B_G$ is the quotient
of $\hat{B}_G = \pi_1(X/G,x_0) = \pi^{-1}(G)$ by $K$, and the short exact
sequence $1 \to \pi_1(X/W_0) \to \pi_1(X/G,x_0)/K \to G/W_0 \to 1$
induces a short exact sequence $1 \to \pi_1(X/W_0)/K \to B_G \to G/W_0 \to 1$.
Identifying $\pi_1(X/W_0)/K$ with $\pi_1(X^0/W_0,K)$ we get
a short exact sequence $1 \to B_0 \to B_G \to G/W_0 \to 1$.

\bigskip

We now consider the central element $\bpi_0 \in P_0$ defined
as the class inside $P_0 = \pi_1(X_0,x_{00})$ of the loop $\gamma_0(t) = x_{00}\exp(2 \ii \pi t)$. By the above identifications, it is identified inside $\pi_1(X^0,x_0)$
with the path $\gamma_1 \star \gamma_0 \star \gamma_1^{-1}$,
where $\gamma_1(t) = x_{00}+t x_1$ (recall that $x_0 = x_1 + x_{00}$).
We prove that it remains a central element inside 
$B_G= \pi_1(X/G,x_0)/K$.

For this, let us consider a path $\gamma : x_0 \leadsto g.x_0$ inside $X$. We need
to prove that the composite $ \gamma^{-1} \star (g.\gamma_1 \star g.\gamma_0 \star g.\gamma_1^{-1})^{-1} \star \gamma \star (\gamma_1 \star \gamma_0 \star \gamma_1^{-1})
$,
which is a path $x_0 \leadsto x_0$ inside $X$, belongs to $K$.
This means that its class must be $0$ inside $\pi_1(X,x_0)/K = \pi_1(X^0,x_0)$. Therefore
we need to prove that 
$\gamma \star \gamma_1 \star \gamma_0 \star \gamma_1^{-1} : x_0 \leadsto g.x_0$ is homotopic to 
$g.\gamma_1 \star g.\gamma_0 \star g.\gamma_1^{-1} \star \gamma$ inside $X^0$.
For this, consider the following map $\tilde{H} : [0,3]\times [0,1] \to X^0$
defined, for $t,u \in [0,1]$, by $\tilde{H}(t,u) = \gamma(u)_{L^{\perp}}+(1-t)\gamma(u)_L$,
$\tilde{H}(1+t,u) = \gamma(u)_{L^{\perp}} \exp(2 \ii \pi t)$, $\tilde{H}(2+t,u) = 
\gamma(u)_{L^{\perp}}+t\gamma(u)_L$. It is not difficult to check
that indeed $\tilde{H}(t,u) \in X^0$ for all $t,u$, and that $\tilde{H}$ is continuous.
Moreover, the boundary of the rectangle $[0,3]\times[0,1]$ has for image the union of the two
paths we are interested in. It follows that these two paths are homotopic, which
proves our claim.

\subsection{Proof of the structure theorem}

\subsubsection{Generalized Hecke algebras}

We now attach to an admissible lattice $\mathcal{L}$ the following datas. To each $x \in
\mathcal{L}$ we attach
\begin{itemize} 
\item the ring $R_x= \Z[a_{H,i}, a_{H,0}^{\pm 1}]$ where $H$ runs
among all $H \in x$, and $1 \leq i \leq m_H-1$
\item the stabilizer $G_x < W$ of $x \in \mathcal{L}$ and the group $\hat{B}_x = \hat{B}_{G_x} = \pi^{-1}(G_x)$ associated to 
$W_0 < G_x < N_W(W_0)$, where $W_0$ is the full reflection subgroup associated to $x \in \mathcal{L}$.
\item the group $B_x = B_{G_x}$ as in the previous section.
\end{itemize}

\bigskip

The generalized Hecke algebra $\tilde{H}_x$ associated to $x \in \mathcal{L}$ is then defined as 
the quotient of the group algebra  $R_{x} B_x$ by the ideal generated by the Hecke relations 
$\sigma^{m_H} - \sum a_{H,i} \sigma^i$ for $\sigma$ a braided reflection with respect
to an hyperplane in $x$. 
Equivalently,
it is the quotient of the group algebra $R_x \hat{B}_x$ by the relations
$\sigma^{m_H} - \sum a_{H,i} \sigma^i$ for $\sigma$ a braided reflection with respect
to an hyperplane of $x$ and $\sigma^{m_H}=1$
for $\sigma$ a braided reflection with respect
to an hyperplane of $\mathcal{A} \setminus x$.

Now recall the short exact sequence $1 \to B_0 \to B_x \to G/W_0 \to 1$, 
and consider the induced
injective map $R_x B_0 \to R_x B_x$. We let $\mathfrak{h}_0$ denote the ideal of $R_x B_0$
generated by the $\sigma^{m_H} - \sum a_{H,i} \sigma^i$ for $\sigma$ a braided reflection with respect
to an hyperplane of $x$.
By definition the quotient algebra $H_0 = R_x B_0/\mathfrak{h}_0$ 
the usual Hecke algebra associated to $W_0$.
We let $\mathfrak{h}_x$ the ideal of $R_x B_x$ generated by the same elements,
and choose a system $b_1,\dots,b_m$ of representatives inside $B_x$
of $B_x/B_0 \simeq G/W_0$. Since the generating set of $\mathfrak{h}_x$
is stable under $B_x$-conjugation, we have $\mathfrak{h}_x = \bigoplus_{i=1}^m b_i \mathfrak{h}_0$. This implies that, as a right $R_x B_0$-module,
$\tilde{H}_x = \bigoplus_{i=1}^m (b_i (R_x B_0))/(b_i \mathfrak{h}_0)
= \bigoplus_{i=1}^m (b_i (R_x B_0))/(b_i \mathfrak{h}_0)$.
Now, $(b_i (R_x B_0))/(b_i \mathfrak{h}_0)$ contains (the class of) $b_i$ and
is clearly a free $H_0$-module of rank $1$. This proves that $\tilde{H}_x = R_x B_x/\mathfrak{h}_x$
is a free $H_0$-module of rank $|G/W_0|$. In particular, $\tilde{H}_x$
is a free $R_x$-module of rank $|G|$ if and only if $H_0$ is a free $R_x$-module
of rank $|W_0|$. This latter assumption is exactly the BMR freeness conjecture for $W_0$.

\subsubsection{Image of the defining ideal}
\label{sect:imageideal}

Let $\mathcal{L}$ be an admissible lattice. The group $B$ acts
on $\mathcal{L}$ via the natural projection map $B \to W$.
We denote $\mathfrak{J}$ the ideal of $\kk B \ltimes \kk \mathcal{L}$ 
generated by the elements $\sigma^m - 1 - Q_s(\sigma)e_H$
where
\begin{itemize}
\item $s$ runs among the distinguished pseudo-reflections of $W$,
\item $\sigma$ is a braided reflection attached to it, 
\item $H = \Ker(s-1)$ is the
fixed hyperplane, and 
\item $e_H \in \kk \mathcal{L}$ is the idempotent
attached to $\{ H \}  \in  \mathcal{L}$
\item $m$ is the order of $s$
\item $Q_s(X) = \sum_{k=0}^{m-1} a_{H,k} X^k - 1$, where $\prod_{k=1}^{m} (X-u_{s,i}) = X^m +\sum_{k=0}^{m-1} a_{H,k} X^k$.
\end{itemize}

Let $s$ be a reflection, and $m$ its order. Let $1 \leq k < m$. For any hyperplane $H \in \mathcal{A}$, we have $s(H) = H \Leftrightarrow s^k(H) = H$. It follows that,
for every $x \in \mathcal{L}$, we have $s.x = x \Leftrightarrow s^k.x = x$.
We consider the composite $\theta$ of the maps provided by propositions \ref{prop:isomobius} and \ref{prop:wigner}
$$
\kk B \ltimes \kk \mathcal{L} \to \kk B\ltimes  \kk^{\mathcal{L}} \to \kk B \ltimes \kk^X \to Mat_X(\kk \hat{B}_{x_*})
$$
where $\hat{B}_{x_*} =\hat{B}_{G_{x_*}} = \pi^{-1}(G_{x_*})$ is the stabilizer of $x_* \in \mathcal{L}$
and $X$ is the orbit of $x_*$ under $B$ (or $W$). We have, for all $r \in \Z$, 
$$
\theta(e_H) = \sum_{\stackrel{x \in X}{H \in x}} E_{x,x}
\ \ \ \ \mbox{and} \ \ \ \ 
\theta(\sigma^r) = \sum_{x \in X} \tau(s^r.x)^{-1} \sigma^r \tau(x) E_{s^r.x,x}.
$$
Since $H \in x \Rightarrow s^r .x = x$,
this implies
$$
\theta(\sigma^r e_H) = \sum_{\stackrel{x \in X}{H \in x}}\tau(x)^{-1} \sigma^r \tau(x) E_{x,x}
\ \ \ \ \mbox{    and    }\ \ \ \ 
\theta(Q_s(\sigma) e_H) = \sum_{\stackrel{x \in X}{H \in x}}\tau(x)^{-1} Q_s(\sigma) \tau(x) E_{x,x}
$$
hence the image under $\theta$ of $\sigma^m - 1 - Q_s(\sigma)e_H$ is equal to
$$
\sum_{\stackrel{x \in X}{H \not\in x}} \tau(x)^{-1}(\sigma^m-1)\tau(x) E_{x,x}
+ \sum_{\stackrel{x \in X}{H \in x}} \tau(x)^{-1}(\sigma^m-1-Q_s(\sigma))\tau(x)E_{x,x}.
$$

Now recall the elementary fact that, for any ring $A$ with $1$ (commutative or not),
the twosided ideal of the matrix algebra $Mat_N(A)$ generated by a collection $S^{\alpha}, \alpha \in F$
of matrices $S^{\alpha} = (S^{\alpha}_{i,j})_{1 \leq i, j \leq N}$ is equal to
$Mat_N(I)$ where $I$ is the twosided ideal of $A$ generated by the
$S^{\alpha}_{i,j}$ for $\alpha \in F$, $1 \leq i,j \leq N$.
If follows that image of the ideal $\mathfrak{J}$ inside $Mat_X(\kk \hat{B}_{x_*})$
is $Mat_X(\mathfrak{J}_X)$
where $\mathfrak{J}_X$ is the ideal of $\kk \hat{B}_{x_*}$ generated by the $\sigma_x^m-1$
for $H \not\in x, x \in X$ and the $\sigma_x^m-1-Q_s(\sigma_x)$ for $H \in x, x \in X$,
where $\sigma_x = \tau(x)^{-1}\sigma\tau(x)$. This is the
same as the ideal of $\kk \hat{B}_{x_*}$ generated by the $\sigma^m-1$
for $\sigma$ a braided reflection around some $H \not\in x_*$,
and the $\sigma^m-1-Q_s(\sigma)$ and $\sigma$ for a braided reflection around some
$H \in x_*$. Therefore $\kk \hat{B}_{x_*}/\mathfrak{J}_X = \kk \tilde{H}_x $
whence, from the isomorphism $\kk B \ltimes \kk \mathcal{L} \simeq
\bigoplus_{X \in \mathcal{L}/W} Mat_X(\kk \hat{B}_{x_*})$
we get the following.
\begin{theorem}  \label{theo:structtheorem} Let $\mathcal{L}$ be an admissible lattice. Then we have an isomorphism
$$
\mathcal{C}_{\kk}(W,\mathcal{L}) \simeq 
\bigoplus_{X \in \mathcal{L}/W} Mat_X(\kk \tilde{H}_{x_*}).
$$
\end{theorem}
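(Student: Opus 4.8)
The statement of Theorem~\ref{theo:structtheorem} is essentially a bookkeeping corollary of the analysis already carried out in this subsection, so the proof should simply assemble the pieces. First I would invoke Propositions~\ref{prop:isomobius} and~\ref{prop:wigner} to get, for a fixed $W$-orbit $X \in \mathcal{L}/W$ with representative $x_* \in X$ and a chosen section $\tau : X \to W$ (lifted to $B$ via $\pi$), the explicit isomorphism
$$
\theta : \kk B \ltimes \kk \mathcal{L} \;\xrightarrow{\ \sim\ }\; \bigoplus_{X \in \mathcal{L}/W} Mat_X(\kk \hat{B}_{x_*}),
$$
obtained by decomposing $\mathcal{L}$ into $W$-orbits (equivalently $B$-orbits, since $B$ acts through $\pi$), applying Proposition~\ref{prop:isomobius} to replace $\kk\mathcal{L}$ by $\kk^{\mathcal{L}}$, and then applying Proposition~\ref{prop:wigner} on each orbit. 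This is exactly the composite $\theta$ displayed in the excerpt.

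Next I would push the defining ideal $\mathfrak{J}$ of $\mathcal{C}_{\kk}(W,\mathcal{L}) = (\kk B \ltimes \kk \mathcal{L})/\mathfrak{J}$ through $\theta$. The formulas already computed in the excerpt give
$$
\theta(\sigma^m - 1 - Q_s(\sigma)e_H) = \sum_{\substack{x \in X \\ H \notin x}} \tau(x)^{-1}(\sigma^m - 1)\tau(x)\, E_{x,x} \;+\; \sum_{\substack{x \in X \\ H \in x}} \tau(x)^{-1}(\sigma^m - 1 - Q_s(\sigma))\tau(x)\, E_{x,x},
$$
using crucially the observation that $H \in x \Rightarrow s^r.x = x$, so that all the relevant matrices are diagonal. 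Then I would apply the elementary fact recalled in the excerpt about ideals of matrix algebras: the two-sided ideal of $Mat_X(A)$ generated by a family of matrices equals $Mat_X(I)$, where $I$ is the two-sided ideal of $A$ generated by all their entries. This identifies $\theta(\mathfrak{J})$ on the $X$-block with $Mat_X(\mathfrak{J}_X)$, where $\mathfrak{J}_X \subseteq \kk\hat{B}_{x_*}$ is generated by the $\sigma_x^m - 1$ for braided reflections around hyperplanes $H \notin x$ (all $x \in X$) and by $\sigma_x^m - 1 - Q_s(\sigma_x)$ for $H \in x$, with $\sigma_x = \tau(x)^{-1}\sigma\tau(x)$.

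The one genuinely nontrivial point — and the step I expect to be the main obstacle — is the reduction of $\mathfrak{J}_X$ to the ideal defining $\tilde{H}_{x_*}$: one must check that the ideal of $\kk\hat{B}_{x_*}$ generated by \emph{all} the conjugates $\sigma_x^m - 1$ ($H \notin x$, $x$ ranging over the orbit) together with the $\sigma_x^m - 1 - Q_s(\sigma_x)$ ($H \in x$) coincides with the ideal generated by the $\sigma^m - 1$ for $\sigma$ a braided reflection around any $H \notin x_*$, the $\sigma^m - 1 - Q_s(\sigma)$ for $\sigma$ around $H \in x_*$, and the $\sigma$ themselves for $H \in x_*$. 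Here one uses that $\hat{B}_{x_*}$ stabilizes $x_*$, hence permutes the hyperplanes of $x_*$ (resp. of $\mathcal{A}\setminus x_*$) among themselves, so conjugating a generator by an element of $\hat{B}_{x_*}$ keeps it within the listed family; and conversely the generators indexed by $x \ne x_*$ are $\tau(x)$-conjugates of generators indexed by $x_*$, with $\tau(x) \in \hat{B}_{x_*}\cdot(\text{lift})$. The extra generators $\sigma$ for $H \in x_*$ — needed to pass from $\hat{B}_{x_*}$ to $B_{x_*}$, i.e.\ to kill the kernel $K$ — appear precisely because the relation $\sigma^m = 1$ for braided reflections around $H \in \mathcal{A}\setminus x$ at a point $x \in X$ with $x \ne x_*$ translates, after conjugation by $\tau(x)$, into such a relation inside $\hat{B}_{x_*}$; the careful matching of these sets is what the phrase ``this is the same as the ideal'' in the excerpt encapsulates, and it relies on the description of $K$ from Subsection~\ref{sect:braidsubgroups}. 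Granting this, $\kk\hat{B}_{x_*}/\mathfrak{J}_X \simeq \kk\tilde{H}_{x_*}$ by definition of the generalized Hecke algebra, and passing to the quotient by $\mathfrak{J}$ block by block yields
$$
\mathcal{C}_{\kk}(W,\mathcal{L}) \;=\; (\kk B \ltimes \kk\mathcal{L})/\mathfrak{J} \;\simeq\; \bigoplus_{X \in \mathcal{L}/W} Mat_X(\kk\hat{B}_{x_*})/Mat_X(\mathfrak{J}_X) \;\simeq\; \bigoplus_{X \in \mathcal{L}/W} Mat_X(\kk\tilde{H}_{x_*}),
$$
which is the claim.
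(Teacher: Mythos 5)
Your proposal is correct and follows essentially the same route as the paper: the composite isomorphism $\theta$ built from Propositions~\ref{prop:isomobius} and~\ref{prop:wigner}, the explicit computation of $\theta(\sigma^m - 1 - Q_s(\sigma)e_H)$ using $H \in x \Rightarrow s^r.x = x$, the matrix-ideal lemma, and the identification of $\mathfrak{J}_X$ with the defining ideal of $\tilde{H}_{x_*}$. The only slip is in your parenthetical about ``extra generators $\sigma$ for $H \in x_*$'': what kills the kernel $K$ (i.e.\ passes from $\hat{B}_{x_*}$ to $B_{x_*}$) are the relations $\sigma^m = 1$ for braided reflections around $H \notin x_*$, obtained as $\tau(x)$-conjugates of the generators $\sigma_x^m - 1$ with $H \notin x$, not additional generators attached to $H \in x_*$; this does not affect the argument.
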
 

The following corollary completes the proof of theorem \ref{theo:structtheoremBMRintro}. 
\begin{corollary}
The algebra 
$\mathcal{C}_{\kk}(W,\mathcal{L}) $
 is a free $\kk$-module of
finite rank if and only if the BMR freeness conjecture holds over $\kk$ for every $x \in \mathcal{L}$.
In that case, its rank is $|W| \times |\mathcal{L}|$, and it is generically semisimple.
\end{corollary}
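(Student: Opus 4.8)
The plan is to deduce everything from the structure theorem \ref{theo:structtheorem}, namely the isomorphism $\mathcal{C}_{\kk}(W,\mathcal{L}) \simeq \bigoplus_{X \in \mathcal{L}/W} Mat_X(\kk \tilde{H}_{x_*})$, together with the analysis of the generalized Hecke algebras $\tilde{H}_x$ carried out in the subsection on generalized Hecke algebras. The point is that each summand $Mat_X(\kk \tilde{H}_{x_*})$ is a free $\kk$-module if and only if $\tilde{H}_{x_*}$ is, and then its rank is $|X|^2 \cdot \mathrm{rk}_{\kk}(\tilde{H}_{x_*})$; moreover a finite direct sum of $\kk$-modules is free of finite rank if and only if each summand is, so $\mathcal{C}_{\kk}(W,\mathcal{L})$ is free of finite rank if and only if every $\tilde{H}_{x_*}$ is.

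First I would record that, by the discussion of generalized Hecke algebras, $\tilde{H}_x$ is a free $H_0$-module of rank $|G_x/W_0|$, where $H_0 = H_{\kk}(W_0)$ is the ordinary Hecke algebra of the full reflection subgroup $W_0$ attached to $x$; consequently $\tilde{H}_x$ is a free $\kk$-module of rank $|G_x|$ precisely when $H_0$ is a free $\kk$-module of rank $|W_0|$, i.e.\ exactly when the BMR freeness conjecture holds over $\kk$ for $W_0$. Since $W_0$ runs (up to conjugacy, as $x$ runs over $\mathcal{L}/W$) over precisely the full reflection subgroups occurring in $\mathcal{L}$, this gives the stated equivalence: $\mathcal{C}_{\kk}(W,\mathcal{L})$ is free of finite rank over $\kk$ if and only if the BMR freeness conjecture holds over $\kk$ for every $x \in \mathcal{L}$.

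Next I would compute the rank in the affirmative case. Assuming BMR freeness for all $x$, each $\tilde{H}_{x_*}$ is free of rank $|G_{x_*}|$, so the $X$-summand has rank $|X|^2 \cdot |G_{x_*}|$. By the orbit-stabilizer theorem $|X| = |W|/|G_{x_*}|$ (the orbit $X$ under $W$ has stabilizer $G_{x_*} = W_{x_*}$), hence $|X|^2 \cdot |G_{x_*}| = |W|^2/|G_{x_*}| = |W| \cdot (|W|/|G_{x_*}|) = |W| \cdot |X|$. Summing over $X \in \mathcal{L}/W$ and using $\sum_{X \in \mathcal{L}/W} |X| = |\mathcal{L}|$ yields $\mathrm{rk}_{\kk}\, \mathcal{C}_{\kk}(W,\mathcal{L}) = |W| \cdot |\mathcal{L}|$, as claimed.

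Finally, for generic semisimplicity I would specialise to $\kk$ an extension of the fraction field $K$ of $R$ (so in particular $\kk$ is a field of characteristic zero). It is classical — a consequence of the BMR freeness statement together with Tits' deformation argument, since the Hecke relations degenerate to the group relations when the parameters $a_{H,i}$ are specialised appropriately — that $H_{K}(W_0)$ is a semisimple algebra isomorphic, over $\bar K$, to $\bar K W_0$; hence $\tilde{H}_{x_*} \otimes \bar K$ is semisimple, being free of rank $|G_{x_*}|$ over the semisimple algebra $H_0 \otimes \bar K$ and carrying, by the argument of the generalized-Hecke-algebra subsection, the structure whose Tits deformation is $\bar K G_{x_*}$. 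A finite direct sum of matrix algebras over semisimple algebras is semisimple, so $\mathcal{C}_{\kk}(W,\mathcal{L})$ is semisimple, and over $\bar K$ we get $\mathcal{C}_{\bar K}(W,\mathcal{L}) \simeq \bigoplus_{X} Mat_X(\bar K W_{x_*}) \simeq \bar K W \ltimes \bar K \mathcal{L}$, the last isomorphism being the $\kk = \bar K$, trivial-parameter case of Proposition \ref{prop:wigner} applied to the $W$-set $\mathcal{L}$. The one genuine obstacle is the equivalence with BMR freeness: one direction is immediate from the rank computation, but for the converse one must argue that if some $\tilde{H}_{x_*}$ fails to be free then the corresponding matrix summand, and hence $\mathcal{C}_{\kk}(W,\mathcal{L})$, cannot be a free $\kk$-module of finite rank — this is where one uses that freeness of $Mat_X(A)$ over $\kk$ forces freeness of $A$ over $\kk$ (embed $A \hookrightarrow Mat_X(A)$ as a corner, or compare with the base change to the fraction field), and that $\tilde{H}_{x_*}$ free of finite rank forces rank exactly $|G_{x_*}|$, which in turn forces the BMR rank for $H_0$.
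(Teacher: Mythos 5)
Your proposal is correct and follows essentially the same route as the paper: the structure theorem, the fact that $\tilde{H}_{x}$ is a free $H_0$-module of rank $|G_x/W_0|$ (so its $\kk$-freeness of rank $|G_x|$ is equivalent to BMR freeness for $W_0$), the orbit--stabilizer count $|X|^2\,|G_{x_*}| = |W|\cdot|X|$ summed over $\mathcal{L}/W$, and a Tits-deformation argument for generic semisimplicity (the paper runs the deformation on the whole algebra after specializing to $\Q W \ltimes \Q\mathcal{L}$ and invoking Maschke, while you run it summand by summand --- an immaterial difference). Your closing remark on the delicacy of the ``only if'' direction over an arbitrary $\kk$ is a fair point, but the paper treats that direction at the same level of detail, so nothing is missing relative to the source.
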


The fact that it is generically semisimple is a consequence of the fact
that, under the specialization morphism $\varphi : R \to \Q$ defined by $a_{H,i} \mapsto 0$
if $i > 0$, $a_{H,0} \mapsto 0$, the algebra $\mathcal{C}(W,\mathcal{L}) \otimes_{\varphi} \Q$
becomes isomorphic to a semidirect product $\Q W \ltimes \Q \mathcal{L}
\simeq \bigoplus_{\mathcal{L}/W} Mat_X(\Q W_{x_*})$, where $W_{x_*} < W$ is the
stabilizer of $x_* \in \mathcal{L}$. By Maschke's theorem we get that $\mathcal{C}(W,\mathcal{L}) \otimes_{\varphi} \Q$ is semisimple, and therefore $\mathcal{C}(W,\mathcal{L})$
is generically semisimple as soon as it is a free $R$-module of finite rank.
By Tits' deformation theorem we get that 
$$\mathcal{C}_{\bar{K}}(W,\mathcal{L}) \simeq \bar{K} W \ltimes \bar{K} \mathcal{L}
\simeq \bigoplus_{\mathcal{L}/W} Mat_X(\bar{K} W_{x_*}).
$$

Since the BMR freeness conjecture is now proved for all irreducible reflection groups
but the ones of Shephard-Todd types $G_{17}$, $G_{18}$ and $G_{19}$ (see
\cite{ARIKI,
ARIKIKOIKE,CYCLO,HECKECUBIQUE,MARINPFEIFFER,CHAVLITHESE,CHAVLICRAS,CB3}),
this proves the
following.

\begin{corollary}
The algebra 
$\mathcal{C}_{\kk}(W,\mathcal{L}) $
is a free $\kk$-module of
rank $|W| \times |\mathcal{L}|$, and is generically semisimple, except possibly if there exists $x \in \mathcal{L}$
whose associated reflection group admits an irreducible component of Shephard-Todd type
$G_{17}$, $G_{18}$ or $G_{19}$.
\end{corollary}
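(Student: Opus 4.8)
The statement is an immediate consequence of the preceding corollary together with the current status of the BMR freeness conjecture, so the plan is to spell out the reduction. By that corollary, $\mathcal{C}_{\kk}(W,\mathcal{L})$ is free over $\kk$ of rank $|W|\times|\mathcal{L}|$ and generically semisimple as soon as the BMR freeness conjecture holds over $\kk$ for every $x\in\mathcal{L}$, i.e.\ for the full reflection subgroup $W_x$ associated to each such $x$. Hence it suffices to prove that, whenever no $W_x$ has an irreducible component of Shephard--Todd type $G_{17}$, $G_{18}$ or $G_{19}$, the conjecture holds for all these $W_x$.

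The first step is to reduce the conjecture for a (possibly reducible) $W_x$ to its irreducible components. Decompose $W_x\simeq W_x^{(1)}\times\cdots\times W_x^{(r)}$ as a product of irreducible complex reflection groups acting on pairwise orthogonal subspaces; the hyperplane complement of $W_x$ then splits as the product of the complements of the $W_x^{(j)}$, so $B_{W_x}\simeq B_{W_x^{(1)}}\times\cdots\times B_{W_x^{(r)}}$, and, sorting the parameters $a_{H,i}$ according to the component containing $H$, the Hecke algebra factors as $H_{\kk}(W_x)\simeq H_{\kk}(W_x^{(1)})\otimes_{\kk}\cdots\otimes_{\kk}H_{\kk}(W_x^{(r)})$. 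A tensor product over $\kk$ of free $\kk$-modules is free of rank the product of the ranks; so if each $H_{\kk}(W_x^{(j)})$ is free of rank $|W_x^{(j)}|$, then $H_{\kk}(W_x)$ is free of rank $\prod_j|W_x^{(j)}|=|W_x|$, which is exactly the BMR freeness conjecture for $W_x$.

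The second step is to invoke the known cases: by \cite{ARIKI,ARIKIKOIKE,CYCLO,HECKECUBIQUE,MARINPFEIFFER,CHAVLITHESE,CHAVLICRAS,CB3} the BMR freeness conjecture holds for every irreducible complex reflection group except those of types $G_{17}$, $G_{18}$ and $G_{19}$. Combining the two steps, if no $W_x$ with $x\in\mathcal{L}$ has an irreducible component among $G_{17}$, $G_{18}$, $G_{19}$, the conjecture holds for every $W_x$, and the preceding corollary yields both the freeness-with-prescribed-rank statement and generic semisimplicity. The only point needing a little attention --- and the natural main obstacle --- is the product decomposition of the Hecke algebra of a reducible reflection subgroup together with the matching of parameter rings; but this is standard, being already built into the description of the braid group via the hyperplane complement, so no real difficulty arises.
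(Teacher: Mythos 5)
Your proof is correct and follows essentially the same route as the paper, which simply combines the preceding corollary (freeness and generic semisimplicity are equivalent to the BMR freeness conjecture for every $x\in\mathcal{L}$) with the known status of that conjecture for irreducible groups. The only extra content you supply is the standard reduction of the conjecture for a reducible reflection subgroup to its irreducible components via the tensor-product decomposition of the Hecke algebra, a step the paper leaves implicit; your treatment of it is accurate.
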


\section{Traces}
\label{sect:traces}

In this section, we slightly extend the ring of definition, for convenience. For
$W$ a given complex reflection group, we denote $R_u = \Z[u_{c,i}^{\pm 1}]$,
where $c$ runs among the conjugacy classes of distinguished pseudo-reflections,
and $i$ between $1$ and the order of (a representative of) $c$. We consider $R$
as a subring of $R_u$ where $a_{H,i}, H \in \mathcal{A}$ is mapped to the $(m_H-i)$-th symmetric
function in the $u_{c,k}$, where $c$ is the conjugacy class corresponding to
the distinguished pseudo-reflection with hyperplane $H$.
We let $H_u$ denote the Hecke algebra of $W$ defined over $R_u$,
that is $H_u = H(W) \otimes_R R_u$.

\subsection{Reminder on canonical traces}
Let $W$ be a complex reflection group, $B$ its braid group, $H = H_u$ its Hecke
algebra, defined over the ring of definition $R_u = \Z[u_{c,i}^{\pm 1}]$.
Let $x \mapsto \bar{x}$ the automorphism of $R_u$ defined by $u_{c,i} \mapsto u_{c,i}^{-1}$.
The group antiautomorphism $g \mapsto g^{-1}$ on $B$
induces an antiautomorphism of $\Z$-algebras $\mathbf{a} : R_u B \to R_u B$
such as $\mathbf{a}(\la g) = \bar{\la} g^{-1}$ for all $\la \in R_u$ and $g \in B$.
The Hecke ideal $\mathfrak{J}_H$ of $R_u B$ is stable by $\mathbf{a}$ 
hence $\mathbf{a}$ induces an automorphism of $H$. It has the property that,
for all parabolic subalgebras $H_0$ of $H$, $H_0$ is $\mathbf{a}$-stable
and the restriction of $\mathbf{a}$ to $H_0$ coincides with the antiautomorphism
associated to $H_0$. Let $t : H \to R$ be a linear form. We assume that $H$
admits a $R_u$-basis whose elements are (images of) elements of $B$. Note that this
is proved so far for all complex reflection groups but the ones having an irreducible component of type $G_{17}$, $G_{18}$ or $G_{19}$. We denote $\bpi$ the natural central
element of $P = \Ker( B \onto W)$. We consider the following assumptions on $t$.
\begin{enumerate}
\item $t$ is a symetrizing trace on $H$.
\item The trace induced on the specialization $\C W$ of $H$ is
the usual trace on the group algebra $\C W$
\item For all $h \in H$, we have $\overline{t(\mathbf{a}(h))} t(\bpi) = t(h \bpi)$.
\end{enumerate}

In \cite{SPETSES1} proposition 2.2 it is proven that, if there exists a trace
satisfying these assumptions, then it is unique. 
It is also proven there that, in case $W$ is a Coxeter group, then the trace given
by $t(T_w) = 0$ if $w \neq 1$, $t(T_1) = 1$, satisfies these assumptions.

\subsection{Traces on generalized Hecke algebras}

Let $\mathcal{L}$ be an admissible lattice, and $x \in \mathcal{L}$. Let $W_0$
denote the full reflection subgroup attached to $x$ and $H_0$ the corresponding
Hecke algebra. We already proved that the generalized Hecke algebra
$\tilde{H}_x$ attached to $x$ is a free $H_0$-module of the form $\bigoplus_{i=1}^m b_i H_0$
where the $b_i$ are (classes inside $\tilde{H}_x$ of) representatives of $B_x/B_0
\simeq G_x/W_0$. Obviously one can assume $b_1 = 1$ hence $b_1 H_0 = H_0$.
Assume that we are given a trace $t : H_0 \to R_u$ satisfying the conditions
of the previous section. We extend it as a linear form $t : \tilde{H}_x \to R_u$
by $t(b_i H_0) = 0$ if $i > 1$. 
\begin{proposition} The extended linear form $t  : \tilde{H}_x \to R_u$
is a symmetrizing trace.
\end{proposition}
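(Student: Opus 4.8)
Write $t_{0}$ for the given trace on $H_{0}$, and recall that $t$ is defined on $\tilde H_{x}=\bigoplus_{i=1}^{m}b_{i}H_{0}$ by $t|_{H_{0}}=t_{0}$ and $t(b_{i}H_{0})=0$ for $i>1$. The plan is to recognise $\tilde H_{x}$ as a crossed product of $H_{0}$ by the finite group $\Gamma=G_{x}/W_{0}$ and to transport the symmetrizing trace through that structure. Take the representatives with $b_{1}=1$ and write $g_{i}\in\Gamma$ for the image of $b_{i}$. Since the generating set of $\mathfrak h_{0}$ is stable under $B_{x}$-conjugation, each $b_{i}$ normalises $H_{0}$ inside $\tilde H_{x}$; put $\phi_{i}(h)=b_{i}hb_{i}^{-1}$ for the resulting $R_{u}$-algebra automorphism of $H_{0}$, and $c_{i,j}=b_{k(i,j)}^{-1}b_{i}b_{j}\in H_{0}^{\times}$, where $b_{k(i,j)}$ is the chosen representative of $g_{i}g_{j}$. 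Then $\tilde H_{x}$ is $\Gamma$-graded with $H_{0}$ the component of degree $1$ and, since $b_{i}H_{0}$ does not depend on the choice of $b_{i}$ within its $\Gamma$-class, $t$ is well defined and equals $t_{0}$ composed with the projection onto $H_{0}$.

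The first and crucial step is to show $t_{0}\circ\phi_{i}=t_{0}$ for every $i$. Since $\phi_{i}$ is an $R_{u}$-algebra automorphism, $t_{0}\circ\phi_{i}$ is again a symmetrizing trace, so it satisfies condition (1); and the automorphism it induces on the specialisation $\C W_{0}$ of $H_{0}$ is conjugation by the image of $b_{i}$ in $N_{W}(W_{0})$, which is an automorphism of the group $W_{0}$ and hence preserves the standard trace of $\C W_{0}$, so condition (2) holds for $t_{0}\circ\phi_{i}$. For condition (3) one uses that $\phi_{i}$ commutes with the antiautomorphism $\mathbf a$ (because $\mathbf a(b_{i}^{-1}hb_{i})=b_{i}^{-1}\mathbf a(h)b_{i}$, both maps descending to $H_{0}$) and that $\phi_{i}$ fixes $\bpi$, since $\bpi$ is central in $B_{x}$ by the result of \S\ref{sect:braidsubgroups}; then $\overline{t_{0}(\phi_{i}(\mathbf a(h)))}\,t_{0}(\phi_{i}(\bpi))=\overline{t_{0}(\mathbf a(\phi_{i}(h)))}\,t_{0}(\bpi)=t_{0}(\phi_{i}(h)\bpi)=t_{0}(\phi_{i}(h\bpi))$, so $t_{0}\circ\phi_{i}$ satisfies (3). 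By the uniqueness statement of \cite{SPETSES1}, proposition 2.2, we conclude $t_{0}\circ\phi_{i}=t_{0}$.

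Granting this, the remaining two points are formal. That $t$ is a trace is checked on basis vectors $a=b_{i}h$, $b=b_{j}h'$: one has $ab=b_{k(i,j)}\,c_{i,j}\phi_{j}^{-1}(h)\,h'$, which lies in the graded piece $g_{i}g_{j}$, so $t(ab)=0$ unless $g_{i}g_{j}=1$, and likewise $t(ba)=0$ unless $g_{j}g_{i}=1$ — the same condition. When $g_{j}=g_{i}^{-1}$ one has $b_{k(i,j)}=b_{k(j,i)}=1$, and $c_{i,j}\phi_{j}^{-1}(h)=b_{i}hb_{j}=\phi_{i}(h)c_{i,j}$ while $c_{j,i}=b_{j}b_{i}=\phi_{i}^{-1}(c_{i,j})$; combining these with $t_{0}\circ\phi_{i}=t_{0}$ gives $t(ab)=t_{0}(\phi_{i}(h)c_{i,j}h')$ and $t(ba)=t_{0}(c_{i,j}h'\phi_{i}(h))$, which agree because $t_{0}$ is a trace on $H_{0}$. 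For non-degeneracy, fix an $R_{u}$-basis $(h_{\alpha})$ of $H_{0}$, so that $(b_{i}h_{\alpha})$ is an $R_{u}$-basis of $\tilde H_{x}$; by the computation above the Gram matrix $\bigl(t(b_{i}h_{\alpha}\cdot b_{j}h_{\beta})\bigr)$ vanishes outside the blocks with $g_{j}=g_{i}^{-1}$, and such a block is $\bigl(t_{0}(\phi_{i}(h_{\alpha})\,c_{i,j}\,h_{\beta})\bigr)_{\alpha,\beta}$, an invertible square matrix since $\phi_{i}$ is bijective, right multiplication by the unit $c_{i,j}$ is bijective, and $t_{0}$ is symmetrizing. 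As $i\mapsto i'$ with $g_{i'}=g_{i}^{-1}$ is an involution of $\{1,\dots,m\}$, the whole Gram matrix is, after reordering, block diagonal with blocks that are either a single invertible square matrix ($g_{i}^{2}=1$) or an invertible $2\times2$ array of two invertible square matrices; in all cases it is invertible over $R_{u}$, so $t$ is symmetrizing.

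The hard part is the first step: the $\phi_{i}$-invariance of the canonical trace $t_{0}$. This is the only place where the geometry genuinely enters — through the centrality of $\bpi$ in $B_{x}$ established in \S\ref{sect:braidsubgroups} and through the Broué–Malle–Michel uniqueness of the canonical trace — and once it is secured, the traciality and non-degeneracy of the extension are a routine application of the theory of crossed products over a symmetric algebra.
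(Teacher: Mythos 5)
Your proof is correct and rests on exactly the same key idea as the paper's: the conjugated linear forms $t_0\circ\phi_i$ satisfy the Brou\'e--Malle--Michel conditions (using the centrality of $\bpi_0$ in $B_x$ from \S 2.2) and hence coincide with $t_0$ by uniqueness, after which traciality and non-degeneracy follow from the decomposition $\tilde H_x=\bigoplus_i b_iH_0$. The only differences are presentational: the paper reduces traciality directly to $t(b_iab_i^{-1})=t(a)$ by a substitution and exhibits the explicit dual basis $e'_jb_i^{-1}$ of $b_ie_j$, where you carry out the crossed-product bookkeeping with the cocycles $c_{i,j}$ and argue invertibility of the Gram matrix blockwise.
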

\begin{proof}
In order for it to be a trace one needs to check
that for all $a_1,a_2 \in H_0$ and $i,j$ we have $t(b_i a_1 b_j a_2) = 
t(b_j a_2 b_i a_1)$. But clearly both terms are $0$ if $b_j \not\in b_i^{-1} H_0$.
Therefore we need to check that $t(b_i a_1 b_i^{-1} a_2) = 
t(b_i^{-1} a_2 b_i a_1)$ for all $i$ and $a_1,a_2 \in H_0$. But this means
$t(b_i a_1 (b_i^{-1} a_2b_i) b_i^{-1}) = 
t(b_i^{-1} a_2 b_i a_1)$. Since $a_2 \mapsto b_i^{-1} a_2 b_i$
induces a bijection of $H_0 \into \tilde{H}_x$ this is equivalent to
saying that $t(b_i a_1 a_2 b_i^{-1}) = t(a_2 a_1)$ for all $a_1,a_2 \in H_0$.
But $t(a_2 a_1) = t(a_1 a_2)$ whence we need to check that, for all $i$ and
all $a \in H_0$, we have $t(b_i a b_i^{-1}) = t(a)$. This holds true for the following
reason. Let $b \in B_x$, and consider the map $a \mapsto t(bab^{-1})$. This
is a trace on $H_0$, which satisfies obviously the conditions (1) and (2) of
the previous section. It also satisfies condition (3) if we can prove
that $b \bpi_0 b^{-1} = \bpi_0$ where $\bpi_0$ is the natural central element
of the pure braid group $P_0$ of $W_0$. But this was proven in section \ref{sect:braidsubgroups} above.
Therefore $t$ is a trace on $\tilde{H}_x$. Taking a basis $e_1,\dots,e_N$
of $H_0$ and letting $e'_1,\dots,e'_N$ its dual basis, so that $t(e_i e'_j) = \delta_{ij}$,
we get that the $b_i e_j$ form a basis for $\tilde{H}_x$,
with dual basis $e'_j b_i^{-1}$. Indeed, $t(b_i e_j e'_r b_s^{-1}) = 
t(b_s^{-1}b_i e_j e'_r ) = 0$ unless $i \neq s$, and in that case it is equal to
$t(e_j e'_r) = \delta_{jr}$. Therefore $t$ is a symmetrizing trace.
\end{proof}

\subsection{Symmetrizing trace}
We recall the following standard property of traces on matrix algebras, the proof being easy
and left to the reader.
\begin{lemma} Let $\kk$ be a commutative ring with $1$, $A$ a $\kk$-algebra
and $N \geq 1$. There is a 1-1 correspondence between trace forms on $A$
and trace forms on $Mat_N(A) = Mat_N(\kk) \otimes_{\kk}A$, the correspondence being given by $t \mapsto tr \otimes t$, where $tr : Mat_N(\kk) \to \kk$ is the matrix trace.
Moreover $tr \otimes t : Mat_N(\kk) \otimes_{\kk}A \to \kk \otimes_{\kk} \kk = \kk$
is symmetrizing if and only if $t$ is symmetrizing. 
\end{lemma}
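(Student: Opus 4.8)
The plan is to prove the final lemma --- the 1-1 correspondence between trace forms on $A$ and on $\Mat_N(A)$, with the matching of the symmetrizing property --- by a direct and elementary computation, exactly as the statement already advertises. First I would establish the bijection at the level of linear forms: given a trace $t : A \to \kk$, set $T = tr \otimes t$ on $\Mat_N(A) = \Mat_N(\kk)\otimes_\kk A$, explicitly $T\bigl((a_{ij})_{i,j}\bigr) = \sum_{i=1}^N t(a_{ii})$ on a matrix with entries $a_{ij}\in A$; conversely, given a trace $T$ on $\Mat_N(A)$, recover $t$ by $t(a) = T(E_{11}\otimes a)$, where $E_{ij}$ are the matrix units. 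One then checks $tr\otimes(a \mapsto T(E_{11}\otimes a)) = T$ using the trace property of $T$ applied to $E_{1i}\cdot(E_{i1}\otimes a) = E_{i1}\otimes a$ versus $(E_{i1}\otimes a)\cdot E_{1i} = E_{ii}\otimes a$, which forces $T(E_{ii}\otimes a) = T(E_{11}\otimes a)$ for all $i$; and $(a \mapsto (tr\otimes t)(E_{11}\otimes a)) = t$ is immediate. The trace property of $tr\otimes t$ itself follows by bilinearity from $(tr\otimes t)\bigl((E_{ij}\otimes a)(E_{k\ell}\otimes b)\bigr) = \delta_{jk}\,tr(E_{i\ell})\,t(ab) = \delta_{jk}\delta_{i\ell}\,t(ab)$, which is symmetric in $(i,j,a)\leftrightarrow(k,\ell,b)$ precisely because $t(ab) = t(ba)$.

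Next I would treat the symmetrizing statement. Recall a trace form $t$ on $A$ is symmetrizing (non-degenerate) iff the induced map $A \to \Hom_\kk(A,\kk)$, $a \mapsto t(a\,\cdot\,)$, is an isomorphism; equivalently, for $A$ free of finite rank over $\kk$, iff the Gram matrix $\bigl(t(e_pe_q)\bigr)_{p,q}$ in some (hence any) basis $e_1,\dots,e_r$ of $A$ is invertible over $\kk$. So I would fix a $\kk$-basis $(e_p)$ of $A$; then $(E_{ij}\otimes e_p)_{1\le i,j\le N,\ 1\le p\le r}$ is a $\kk$-basis of $\Mat_N(A)$, and the Gram matrix of $T = tr\otimes t$ in this basis has entries $T\bigl((E_{ij}\otimes e_p)(E_{k\ell}\otimes e_q)\bigr) = \delta_{jk}\delta_{i\ell}\,t(e_pe_q)$. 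Reordering the basis so that the pair $(i,j)$ varies slowest, this Gram matrix is, up to the permutation swapping $(i,j)\mapsto(j,i)$ in the column-block index, block-diagonal (really a permuted copy) with $N^2$ blocks each equal to the Gram matrix $G = (t(e_pe_q))$ of $t$. Since a (signed) permutation of rows/columns preserves invertibility and a block-diagonal matrix is invertible iff each block is, $T$ is symmetrizing iff $G$ is invertible iff $t$ is symmetrizing. (If one prefers to avoid the freeness hypothesis, the same conclusion follows from the module-theoretic characterization: $\Mat_N(A) \to \Hom_\kk(\Mat_N(A),\kk)$ is, after identifying $\Hom_\kk(\Mat_N(\kk)\otimes A, \kk) \cong \Mat_N(\kk)^*\otimes \Hom_\kk(A,\kk)$, the tensor product of the always-invertible pairing on $\Mat_N(\kk)$ with the map induced by $t$ on $A$, hence bijective iff the latter is.)

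I expect no serious obstacle here --- the lemma is deliberately routine, and the authors have flagged it as such. The only point requiring a little care is making the bookkeeping of indices in the Gram matrix unambiguous: one must verify that the off-block entries genuinely vanish (they do, because of the $\delta_{jk}\delta_{i\ell}$ factor, which is why the blocks do not interact) and that the permutation relating the naive $(i,j;p)$-ordering to a clean block-diagonal form is just a relabelling of basis vectors and so harmless for invertibility. A secondary subtlety is whether $A$ is assumed free of finite rank over $\kk$; in the intended application $A = \kk\tilde{H}_{x_*}$ is free of finite rank under the freeness hypotheses already in force, so I would simply state the lemma with that standing assumption (or give the basis-free argument in the parenthetical remark to cover the general case), and the proof goes through unchanged.
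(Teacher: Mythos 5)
Your proof is correct and is exactly the routine argument the paper has in mind (the paper explicitly leaves the proof to the reader), including the appropriate handling of the freeness/projectivity issue needed to make sense of ``symmetrizing.'' Two tiny points to tidy up: the identity you invoke should read $E_{1i}\cdot(E_{i1}\otimes a)=E_{11}\otimes a$ (not $E_{i1}\otimes a$), and to conclude $tr\otimes t'=T$ for $t'(a)=T(E_{11}\otimes a)$ you should also record that $T(E_{ij}\otimes a)=0$ for $i\neq j$, which follows from the same trick: $T(E_{ij}\otimes a)=T\bigl((E_{ij}\otimes a)(E_{jj}\otimes 1)\bigr)=T\bigl((E_{jj}\otimes 1)(E_{ij}\otimes a)\bigr)=\delta_{ij}\,T(E_{jj}\otimes a)$.
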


From the isomorphism $(\kk B \ltimes \kk \mathcal{L})/\mathfrak{J} \simeq \bigoplus_{X \in \mathcal{L}/W} Mat_X(\kk \tilde{H}_{x_*})$ we are able to construct a trace form,
as 
$$
\bigoplus_{X \in \mathcal{L}/W} t_{x_*} \otimes tr : 
\bigoplus_{X \in \mathcal{L}/W} Mat_X(\kk \tilde{H}_{x_*})
= \bigoplus_{X \in \mathcal{L}/W} Mat_X(\kk )\otimes_{\kk} \kk \tilde{H}_{x_*} \to \kk
$$
and by the above property it is a symmetrizing form. This proves the following.

\begin{theorem} Let $\mathcal{L}$ be an admissible lattice for $W$, and
$\kk$ a commutative $R_u$-algebra. If the Broué-Malle-Michel
trace conjecture holds for all $x \in \mathcal{L}$,
then
the algebra 
$\mathcal{C}_{\kk}(W,\mathcal{L}) $
is a symmetric algebra. 
It is in particular the case when $W$ is a real reflection group.
\end{theorem}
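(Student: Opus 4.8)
The plan is to assemble the symmetric algebra structure from the pieces already in place. The structure theorem \ref{theo:structtheorem} identifies $\mathcal{C}_{\kk}(W,\mathcal{L})$ with the direct sum $\bigoplus_{X \in \mathcal{L}/W} Mat_X(\kk \tilde{H}_{x_*})$, so it suffices to produce a symmetrizing trace on each block $Mat_X(\kk \tilde{H}_{x_*})$ and take their direct sum. By the lemma on traces of matrix algebras, a symmetrizing trace on $Mat_X(\kk \tilde{H}_{x_*})$ is the same as a symmetrizing trace on $\kk \tilde{H}_{x_*}$, tensored with the matrix trace; hence the whole problem reduces to equipping each generalized Hecke algebra $\tilde{H}_{x_*}$ with a symmetrizing trace over $\kk$.

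For that last point I would invoke the proposition of the preceding subsection: given a symmetrizing trace $t : H_0 \to R_u$ on the ordinary Hecke algebra $H_0$ attached to the full reflection subgroup $W_0$ associated to $x_*$, the linear extension $t : \tilde{H}_{x_*} \to R_u$ killing the off-diagonal cosets $b_i H_0$ for $i>1$ is again symmetrizing. The hypothesis that the Brou\'e--Malle--Michel trace conjecture holds for all $x \in \mathcal{L}$ — together with the strong freeness assumption, which guarantees that $H_0$ has a basis coming from elements of $B$, as needed for the conjecture to even be stated — supplies exactly such a $t$ on each $H_0 = H_{x_*}$. One then extends scalars from $R_u$ to the commutative $R_u$-algebra $\kk$; since the symmetrizing property is expressed by the non-degeneracy of the bilinear form $(a,b) \mapsto t(ab)$, and a symmetrizing trace remains symmetrizing after base change along $R_u \to \kk$ provided the dual basis exists and transports, this is harmless. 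Finally, for $W$ a real (i.e.\ finite Coxeter) reflection group, every parabolic $W_0$ is again a finite Coxeter group, so the trace $t(T_w) = \delta_{w,1}$ satisfies conditions (1)--(3) by the result recalled from \cite{SPETSES1}, making the statement unconditional in that case.

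The main obstacle, such as it is, is bookkeeping rather than mathematics: one must make sure that the trace conjecture hypothesis is applied to the \emph{correct} Hecke algebras, namely the $H_0$ attached to the full reflection subgroups $W_0$ of the representatives $x_* \in \mathcal{L}$, and that ``for all $x \in \mathcal{L}$'' in the hypothesis genuinely covers all orbit representatives. One should also note that the generalized Hecke algebra $\tilde{H}_{x_*}$ involves the possibly larger group $G_{x_*}$ with $W_0 < G_{x_*} < N_W(W_0)$, so the extension-of-trace proposition is doing real work — it is where the centrality of $\bpi_0$ inside $B_{G_{x_*}}$, established in Section~\ref{sect:braidsubgroups}, gets used to verify condition (3) for the conjugated traces $a \mapsto t(b_i a b_i^{-1})$. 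Once these points are checked, the direct sum $\bigoplus_{X} tr \otimes t_{x_*}$ is the desired symmetrizing trace on $\mathcal{C}_{\kk}(W,\mathcal{L})$, and the corollary about the diagram algebra of braids and ties follows by specializing $W$ to the symmetric group and $\mathcal{L}$ to the lattice of all (parabolic = reflection) subgroups.
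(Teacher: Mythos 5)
Your proposal is correct and follows the paper's own argument exactly: the structure theorem reduces everything to the blocks $Mat_X(\kk\tilde{H}_{x_*})$, the matrix-trace lemma reduces those to the generalized Hecke algebras, and the extension proposition (whose condition (3) indeed relies on the centrality of $\bpi_0$ from the section on braid groups of reflection subgroups) supplies the symmetrizing trace on each $\tilde{H}_{x_*}$ from the Brou\'e--Malle--Michel trace on $H_0$. The only difference is that you spell out the base change $R_u \to \kk$ and the Coxeter case more explicitly than the paper does, which is harmless.
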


\section{Main examples}

We recall that a reflection subgroup $W_0$ of $W$ is called \emph{full} if, for
every reflection $s \in W_0$, all the reflections with respect to the same reflecting hyperplane
belong to $W_0$. Such a reflection subgroup is uniquely determined by
the set of its reflecting hyperplanes. Of course reflection subgroups of real reflection groups and, more generally,
of 2-reflection groups, are full.

Let $\mathcal{L}_{\infty}$ denote the poset of all full reflection subgroups, ordered
by inclusion. For convenience, we prefer to consider it as a poset of subsets
of $\mathcal{L}$, also ordered by inclusion.

Recall that a subset $\mathcal{L} \subset \mathcal{L}_{\infty}$ is called \emph{admissible} if it is a sub-join-semilattice  of $\mathcal{L}_{\infty}$ 
which satisfies the following conditions: 
\begin{enumerate}
\item It is $W$-stable
\item It contains all $\{ H \}$, for $H \in \mathcal{A}$, as well as the trivial subgroup.
\end{enumerate}
Because such an $\mathcal{L}$ always contains a minimal element (the trivial group), there is no ambiguity in the definition of the semi-lattice : the fact that $a \vee b$ exists for every two elements of
$\mathcal{L}$ is in this case equivalent to saying that every finite subset of elements, including the
empty one, admits a join. Moreover, since such an $\mathcal{L}$ is always finite,
it is automatically a lattice. Therefore, we can equivalently talk about
admissible \emph{lattices}.

\subsection{The category of admissible semi-lattices and maps}

Let $\mathcal{L}$ and $\mathcal{L}'$ be two admissible semi-lattices.
A map $\mathcal{L} \to \mathcal{L}'$  is called admissible if it
is a $W$-equivariant morphism of join semi-lattices which is the identity
on the cyclic and trivial reflection subgroups. The collection of admissible
semi-lattices with morphisms the admissible maps forms a (small, finite) category $\mathcal{CL}_W$,
and $\mathcal{C}_{\kk}(W,\bullet)$ defines a functor from $\mathcal{CL}_W$ to
the category of (associative, unital) $\kk$-algebras. The category $\mathcal{CL}_W$
admits a terminal object that we call $\mathcal{L}_2$ : it is the subset of $\mathcal{L}_{\infty}$
made of the trivial and cyclic reflection subgroups together with the whole group $W$.
Obviously, for every admissible $\mathcal{L}$ there exists exactly one admissible map $\mathcal{L}
\to \mathcal{L}_2$. In particular there exists exactly one admissible map $\mathcal{L}_{\infty}
\to \mathcal{L}_2$.

More generally, define the parabolic rank of a reflection subgroup 
$W_0$ as the rank of the smallest parabolic subgroup containing $W_0$,
or equivalently as the codimension of its set of fixed points. Then,
the sub-poset $\mathcal{L}_n$ made of all reflection subgroups of parabolic rank at most $n$ plus the whole group is an admissible semi-lattice as soon as $n \geq 2$,
and there is an admissible map $\mathcal{L}_m \to \mathcal{L}_n$ when $m \geq n$
given by $W_0 \mapsto W_0$ if $W_0$ has parabolic rank
 at most $n$, and $W_0 \mapsto W$
if $W_0$ has rank at least $n+1$. This applies to $m = \infty$ as well.

\subsection{The semi-lattice $\mathcal{L}_2$}

The $W$-orbits of $\mathcal{L}_2$ are $\{ \{ 1 \} \}$, $\{ W \}$ together with
the $b_c = \{ \{ H \} ; H \in c \}$ for every $c \in \mathcal{A}/W$. It is immediately
checked that $\kk \tilde{H}_{1} = \kk W$ and $\kk \tilde{H}_{W_*} = H_{\kk}(W)$.
From theorem \ref{theo:structtheorem} we get that
$$
\mathcal{C}_{\kk}(W,\mathcal{L}_2) = \kk W \oplus 
H_{\kk}(W)
\oplus \bigoplus_{c \in \mathcal{A}/W} Mat_{|c|}(
\kk \tilde{H}_{c_*})
$$

A remarkable fact about the $x = \{ H \} \in \mathcal{L}$ of rank $1$, for any admissible poset,
is that the generalized Hecke algebras $\tilde{H}_{x}$ are free deformations of
the group algebra $\kk G(H)$, where $G(H) = \{ w \in W \ | \ w(H) = H \}$,
without having to invoque the BMR freeness conjecture (or, said differently,
it corresponds to the trivial case (rank 1) of the BMR freeness conjecture).

\subsection{The case of finite Coxeter groups}

Assume that $W$ is a real reflection group, and let $(W,S)$ be a
Coxeter system attached to it. Then $B$
admits a presentation as an Artin group, with generators $b_s, s \in S$.
The map $B \to W$ admits a natural set-theoretic section, called
Tits' section, and defined by $w \mapsto b_w = b_{s_1} \dots b_{s_n}$
where $s_i \in S$ and $w = s_1 \dots s_n$ is an expression of $w$ 
as a product of the generators of minimal length. The classical theory
tells us that it is well-defined. We denote $g_w$ the image of $b_w$
inside $\mathcal{C}(W,\mathcal{L})$ under the natural $R$-algebra morphism $R B \to
\mathcal{C}(W,\mathcal{L})$. 

Since the BMR freeness conjecture
is true for all reflection subgroups of $W$, from theorem \ref{theo:structtheoremBMRintro} we
know that $\mathcal{C}(W,\mathcal{L})$ is a free $R$-module of rank $|W| \times |\mathcal{L}|$. More precisely, we have the following.

\begin{proposition} Let $W$ be a finite Coxeter group and $\mathcal{L}$
an admissible lattice. Then $\mathcal{C}(W,\mathcal{L})$ admits for
basis the elements $g_w e_L$ for $w \in W$ and $L \in \mathcal{L}$.
\end{proposition}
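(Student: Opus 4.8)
The plan is to combine the structure theorem (Theorem \ref{theo:structtheorem}) with the basis we already have on each matrix block, and then transport it back through the isomorphism $\theta$. Concretely, since $W$ is a finite Coxeter group, the BMR (indeed the strong) freeness conjecture holds for every reflection subgroup, so for each orbit representative $x_* \in \mathcal{L}/W$ the generalized Hecke algebra $\tilde H_{x_*}$ is a free $R$-module, and the construction of $\tilde H_{x_*}$ as $\bigoplus_{i=1}^m b_i H_0$ shows it has a basis consisting of (images of) Tits-section elements $g_w$, $w \in W$: take the Tits basis $\{g_w : w \in W_0\}$ of the parabolic Hecke algebra $H_0$ and multiply by coset representatives $b_i$ of $G_{x_*}/W_0$ chosen themselves as Tits-section elements. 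Thus $Mat_X(\kk\tilde H_{x_*})$ has a basis of the form $g_w \cdot E_{x,y}$ with $w$ ranging over $G_{x_*}$-coset data and $x,y$ over the orbit $X$; summing over orbits, $\mathcal{C}_\kk(W,\mathcal{L})$ is free of rank $|W|\times|\mathcal L|$, which is already known, but the point is to identify an explicit basis inside $\mathcal{C}_\kk(W,\mathcal{L})$ itself.

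The second step is to write out what the isomorphism $\theta$ of section \ref{sect:imageideal} does to the candidate elements $g_w e_L$. Recall $\theta(e_L) = \sum_{x \in X,\, L \le x} E_{x,x}$ (summing the Möbius-algebra formula $e_L \mapsto \sum_{L\le\mu}\eps_\mu$ with the Wigner formula $\eps_x \mapsto E_{x,x}$), and $\theta(g_w) = \sum_{x\in X}\tau(w.x)^{-1} g_w \tau(x) E_{w.x,x}$ on each block $X$. Hence $\theta(g_w e_L) = \sum_{x\in X,\, L\le x} \tau(w.x)^{-1}g_w\tau(x)\,E_{w.x,x}$, a sum of matrix units with coefficients that are products of Tits-section elements, landing in $Mat_X(\kk\tilde H_{x_*})$. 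One then has to check that, as $w$ runs over $W$ and $L$ over the elements of $\mathcal L$ lying in the $W$-orbit corresponding to $X$ (equivalently, over $L \le x$ for the various $x\in X$), these images form exactly the block-diagonal collection of basis elements $g_v E_{x,y}$ described above — i.e. that the map $(w,L) \mapsto \theta(g_w e_L)$ is, block by block, a bijection onto a basis. This is essentially a re-indexing: fixing the target block $X$ amounts to fixing the $W$-orbit of $L$, the pair $(x, w.x)$ with $L \le x$ records the matrix-unit position, and $\tau(w.x)^{-1}g_w\tau(x)$ records the coefficient in $\tilde H_{x_*}$, which by the choices above is again a Tits-section element times a basis element of $H_0$; a counting argument ($|W|\cdot|\mathcal L| = \sum_X |X|^2 \cdot \operatorname{rk}\tilde H_{x_*}$, using $|X|=[W:W_{x_*}]$ and $\operatorname{rk}\tilde H_{x_*}=|W_{x_*}|$) upgrades ``spans'' to ``is a basis''.

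The step I expect to be the main obstacle is the bookkeeping in the previous paragraph: one must verify that the coefficients $\tau(w.x)^{-1}g_w\tau(x)$, with $\tau$ a section $X \to W$ composed with the Tits section, actually lie among the chosen basis elements of $\tilde H_{x_*}$ (or at least span the same module), which requires a compatible choice of $\tau$ — taking $\tau(x)$ to be a minimal-length coset representative and using that the Tits section is multiplicative on length-additive products (the braid relation / Matsumoto-type argument). There is a subtlety that $g_{w.x}^{-1} g_w g_x$ need not equal $g_{(w.x)^{-1}wx}$ on the nose, since the Tits section is not a homomorphism; one circumvents this by noting it suffices that the family obtained is a $\kk$-basis, and freeness plus the rank count forces this once $\kk$-linear independence is checked, which can be done by specializing via $\varphi : a_{H,i}\mapsto 0$ to $\kk W \ltimes \kk\mathcal L$ where $g_w e_L \mapsto w\, e_L$ is visibly a basis (Proposition \ref{prop:wigner} applied to $W$ acting on $\mathcal L$). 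So in outline: (i) recall freeness and the explicit basis of $\tilde H_{x_*}$; (ii) show the $g_w e_L$ span $\mathcal{C}_\kk(W,\mathcal{L})$ over $\kk$ by computing $\theta(g_w e_L)$ and comparing with the block bases; (iii) show $\kk$-linear independence by the specialization $\varphi$; (iv) conclude by the rank count. I would present (iii)--(iv) first as the clean argument, using (ii) only to get spanning.
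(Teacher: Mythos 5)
Your outline rests on a false principle at the decisive step. You propose to present (iii)--(iv) as ``the clean argument'': check $\kk$-linear independence of the $g_we_L$ by specialization and then invoke ``freeness plus the rank count.'' But over a commutative ring a linearly independent family whose cardinality equals the rank of a free module need \emph{not} be a basis --- $\{2\}\subset\Z$ is the standard counterexample; linear independence only forces the determinant of the coordinate matrix to be a nonzerodivisor, not a unit. The implication that does hold (because a surjective endomorphism of a finitely generated module over a commutative ring is injective) is the dual one: a \emph{spanning} set of cardinality equal to the rank is a basis. So the whole weight of the proof falls on spanning, which you relegate to step (ii), and which is exactly what the paper proves: it shows directly that the $R$-span $V$ of the $g_we_L$ is a left ideal containing $1$, using $e_Lg_w=g_we_{w^{-1}(L)}$, induction on Coxeter length, and the quadratic relation $g_s^2=1+(u_s-1)e_{\langle s\rangle}(1+g_s)$, and then concludes by the cardinality count against the known rank $|W|\times|\mathcal L|$.

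Your sketch of (ii) is also not correct as stated. Since $\theta(e_L)=\sum_{x\supseteq L}E_{x,x}$ summed over \emph{all} orbits containing such $x$, the element $\theta(g_we_L)$ is spread over many blocks and many matrix positions at once; the map $(w,L)\mapsto\theta(g_we_L)$ is therefore not ``block by block a bijection onto a basis.'' To extract individual matrix units you would need a M\"obius-inversion (triangularity) argument on $\mathcal L$, and on top of that the identification of the coefficients $\tau(w.x)^{-1}g_w\tau(x)$ with chosen basis elements of $\tilde H_{x_*}$ requires the Matsumoto-type compatibility you yourself flag as the main obstacle but do not resolve. In short: the specialization argument for independence is fine but superfluous, the rank count is fine, and the one step that actually needs proving --- spanning --- is both logically essential and not established by your sketch. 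The paper's direct left-ideal computation is the missing content.
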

\begin{proof}
Since the collection $\{ g_w e_L ; w \in W, L \in \mathcal{L}\}$ has the right
cardinality, it is sufficient to prove that it spans the free $R$-module
of finite rank $\mathcal{C}(W, \mathcal{L})$. For this we consider its
span that we denote $V$ ; we remark that $1 \in V$, and prove that
it is a left ideal of the $R$-algebra $\mathcal{C}(W, \mathcal{L})$.
Since the $g_s, s \in S$ and $e_L, L \in \mathcal{L}$ generate $R B \ltimes R \mathcal{L}$
as an algebra,
they also generate $\mathcal{C}(W, \mathcal{L})$ and therefore
it is sufficient to show that $g_s.x \in V$ and $e_L.x \in V$ for $x$ running among a spanning
set of $V$. Setting $x = g_w e_M$ for some $w \in W, M \in \mathcal{L}$, we get
$e_L g_w e_M = g_w e_{w^{-1}(L)} e_M =g_w e_{w^{-1}(L) \vee M} \in V$.
Let $\ell : W \to \N = \Z_{\geq 0}$ denote the classical length function. If $\ell(sw) = \ell(w)+1$,
then $g_s x = g_s g_w e_M = g_{sw} e_M \in V$. If not,
$w$ can be written $w = s w'$ with $\ell(w') = \ell(w)-1$. Then $g_s g_w = 
g_s^2 g_{w'} = g_{w'} + (u_s-1) e_{\langle s \rangle } (1+g_s) g_{w'} = 
g_{w'} + (u_s-1) e_{\langle s \rangle } g_{w'} + (u_s-1) e_{\langle s \rangle } g_s g_{w'}
= g_{w'} + (u_s-1) g_{w'} e_{\langle s^{w'} \rangle }  + (u_s-1) e_{\langle s \rangle } g_w$,
hence $g_s g_w e_M =  g_{w'}e_M + (u_s-1) g_{w'} e_{\langle s^{w'} \rangle }e_M  + (u_s-1) e_{\langle s \rangle } g_we_M
=  g_{w'}e_M + (u_s-1) g_{w'} e_{\langle s^{w'} \rangle \vee M}  + (u_s-1) e_{\langle s \rangle } g_we_M \in V$. This proves the claim.
\end{proof}

This proposition implies the following corollary, which could also be directly obtained
from the approach of \cite{YH1}  -- for instance by extending the left action of $\mathcal{C}_W$
on itself to an action of $\mathcal{C}(W,\mathcal{L}_{\infty})$.

\begin{corollary} If $W$ is a finite Coxeter group, then $\mathcal{C}_W \simeq \mathcal{C}(W,\mathcal{L}_{\infty})$.
\end{corollary}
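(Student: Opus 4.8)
The plan is to produce a surjective $R$-algebra morphism $\psi : \mathcal{C}_W \to \mathcal{C}(W,\mathcal{L}_{\infty})$ and then to show it is bijective by transporting the basis furnished by the proposition above. Recall first that, since every reflection subgroup of a real reflection group is full, the poset $\mathcal{L}_{\infty}$ is exactly the lattice of all reflection subgroups of $W$, which is the one used to define $\mathcal{C}_W$ in \cite{YH1}; there $\mathcal{C}_W$ is described by an explicit presentation, generated over $R$ by elements $g_s$ ($s \in S$) and idempotents $e_{\la}$ ($\la$ a reflection subgroup of $W$), subject to the Artin braid relations among the $g_s$, the M\"obius relations $e_{\la} e_{\mu} = e_{\la \vee \mu}$, the equivariance relations $g_s e_{\la} = e_{s(\la)} g_s$, and the quadratic relations $g_s^2 = 1 + (u_s-1) e_{\langle s \rangle}(1 + g_s)$. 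Each of these relations holds in $\mathcal{C}(W,\mathcal{L}_{\infty})$: the braid relations already hold in $B$; the M\"obius relations hold by the very definition of $\kk \mathcal{L}_{\infty}$; the equivariance relations hold because $\mathcal{C}(W,\mathcal{L}_{\infty})$ is a quotient of the semidirect product $R B \ltimes R \mathcal{L}_{\infty}$, so that $g_s e_{\la} g_s^{-1} = e_{\pi(g_s)(\la)} = e_{s(\la)}$; and the quadratic relations are precisely the images, in the case $m_H = 2$, of the defining elements $\sigma^{m_H} - 1 - Q_s(\sigma)e_H$ of the ideal $\mathfrak{J}$. Hence there is a well-defined $R$-algebra morphism $\psi$ sending $g_s \mapsto g_s$ and $e_{\la} \mapsto e_{\la}$, and it is surjective because, as recorded in the proof of the proposition above, the $g_s$ and the $e_{\la}$ generate $\mathcal{C}(W,\mathcal{L}_{\infty})$.

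To see that $\psi$ is injective, observe that the computation performed in the proof of the proposition above uses only the four families of relations listed above, all of which hold verbatim in $\mathcal{C}_W$, together with the identities $g_s g_w = g_{sw}$ whenever $\ell(sw) = \ell(w)+1$, which hold in both algebras since they are images of identities in $B$ under Tits' section. Running that argument inside $\mathcal{C}_W$ therefore shows that the $R$-span of $\{ g_w e_L ;\ w \in W,\ L \in \mathcal{L}_{\infty}\}$ is a left ideal of $\mathcal{C}_W$ containing $1$, hence equals $\mathcal{C}_W$; so these elements span $\mathcal{C}_W$ over $R$. Since $\psi(g_w e_L) = g_w e_L$ and, by the proposition, the $g_w e_L$ form an $R$-basis of $\mathcal{C}(W,\mathcal{L}_{\infty})$, the elements $g_w e_L \in \mathcal{C}_W$ are $R$-linearly independent. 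Thus $\{ g_w e_L\}$ is an $R$-basis of $\mathcal{C}_W$ which $\psi$ carries bijectively onto an $R$-basis of $\mathcal{C}(W,\mathcal{L}_{\infty})$, so $\psi$ is an isomorphism of $R$-algebras.

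The only step demanding genuine care is the first: one must check that the presentation of $\mathcal{C}_W$ in \cite{YH1} — in particular the normalization of its parameters $u_s$ against the parameters $a_{H,i}$ used here — corresponds exactly to the relations defining the quotient $\mathcal{C}(W,\mathcal{L}_{\infty})$ of $R B \ltimes R \mathcal{L}_{\infty}$; once this dictionary is fixed, the rest is formal. If one prefers to bypass the explicit presentation, one can instead, as indicated before the statement, let $\mathcal{C}_W$ act on itself by left multiplication and verify that the idempotents $e_{\la}$ and the braided reflections act compatibly with the defining relations of $\mathcal{C}(W,\mathcal{L}_{\infty})$, thereby extending this action to an algebra map $\mathcal{C}(W,\mathcal{L}_{\infty}) \to \End_R(\mathcal{C}_W)$ whose image is $\mathcal{C}_W$ acting by left translations; comparing this with $\psi$ again exhibits the two mutually inverse morphisms.
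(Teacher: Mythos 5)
Your proof is correct and follows the same overall strategy as the paper: build the morphism $\mathcal{C}_W \to \mathcal{C}(W,\mathcal{L}_{\infty})$ by checking the defining relations of the presentation on the generators $g_s$ and $e_{\la}$, then conclude by a basis comparison. The one genuine difference is in how injectivity is obtained. The paper invokes theorem 3.4 of \cite{YH1}, which already provides the $g_w e_L$ as an $R$-basis of $\mathcal{C}_W$, and then simply observes that the morphism carries this basis to the basis of $\mathcal{C}(W,\mathcal{L}_{\infty})$ furnished by the preceding proposition. You instead re-run the spanning argument of that proposition internally in $\mathcal{C}_W$ (legitimately, since it only uses relations present in the presentation together with $g_sg_w=g_{sw}$ when lengths add), so you only need that the $g_w e_L$ \emph{span} $\mathcal{C}_W$; linear independence, and hence both the basis property in $\mathcal{C}_W$ and the injectivity of $\psi$, then fall out of the fact that their images form a basis of the target. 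This makes your argument slightly more self-contained --- it recovers theorem 3.4 of \cite{YH1} as a by-product rather than assuming it --- at the cost of having to verify that the spanning computation transfers verbatim to $\mathcal{C}_W$, which you correctly identify (along with the matching of the presentations and parameter normalizations) as the only point requiring care. Your closing alternative via the left regular action is exactly the route the paper mentions parenthetically before the statement.
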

\begin{proof} The elements $g_s, s \in S$ and $e_H, H \in \mathcal{A}$
clearly satisfy inside $\mathcal{C}(W,\mathcal{L}_{\infty})$ the defining relations of $\mathcal{C}_W$, and from this we get
an algebra morphism $\mathcal{C}_W \to \mathcal{C}(W, \mathcal{L}_{\infty})$. From
the above proposition and theorem 3.4 of \cite{YH1} we get that it maps a
basis of $\mathcal{C}_W$ to a basis of $\mathcal{C}(W,\mathcal{L}_{\infty})$,
and therefore it is an isomorphism.
\end{proof}

Therefore, the construction of $\mathcal{C}(W,\mathcal{L}_{\infty})$ indeed generalizes
to the complex reflection group case  the algebra $\mathcal{C}_W$ of a finite Coxeter group introduced in \cite{YH1}.

\subsection{The parabolic lattice}

A $W$-stable subposet of $\mathcal{L}_{\infty}$ is given by the collection $\mathcal{L}_p$
of parabolic subgroups. It can be identified with 
the arrangement lattice $L(\mathcal{A})$,
that is the collection of all intersections of hyperplanes in $\mathcal{A}$,
ordered by reverse inclusion. More precisely, there exists a map
$\mathrm{Fix} : \mathcal{L} \to L(\mathcal{A})$ where
$\mathrm{Fix}(x)$ is the intersection of all reflecting hyperplanes in $x$, and its
restriction to $\mathcal{L}_p$ is a bijection.

\begin{proposition} For $x \in \mathcal{L}_{\infty}$ a reflection subgroup,
let $[x] \in \mathcal{L}_p$ denote the parabolic closure of $x$. Then
$x \mapsto [x]$ is an admissible map $\mathcal{L}_{\infty} \to \mathcal{L}_p$
inducing a quotient map $\mathcal{C}(W,\mathcal{L}_{\infty}) \to \mathcal{C}(W,
\mathcal{L}_p)$.
\end{proposition}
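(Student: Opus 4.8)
The plan is to verify two things: first, that the assignment $x \mapsto [x]$ (parabolic closure) is genuinely an admissible map of semi-lattices in the sense defined above; and second, that any admissible map of lattices induces a quotient morphism on the associated $\mathcal{C}$-algebras — since $\mathcal{C}_{\kk}(W,\bullet)$ is a functor on $\mathcal{CL}_W$, the second part is essentially automatic once the first is established, but I will want to record why the induced morphism is \emph{surjective}. So the real content is the admissibility of parabolic closure.

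First I would check that $x \mapsto [x]$ is a morphism of join semi-lattices, i.e.\ that $[x \vee y] = [x] \vee [y]$ in $\mathcal{L}_p$. The inequality $[x]\vee[y] \leq [x\vee y]$ is clear since $x \vee y$ contains both $x$ and $y$, hence its parabolic closure contains $[x]$ and $[y]$. For the reverse, note that under the bijection $\mathrm{Fix} : \mathcal{L}_p \to L(\mathcal{A})$ the join in $\mathcal{L}_p$ corresponds to intersection of fixed-point subspaces, and $\mathrm{Fix}([x]) = \mathrm{Fix}(x)$ because the parabolic closure of $x$ is the pointwise stabilizer of $\mathrm{Fix}(x)$ (Steinberg's theorem), which has the same fixed-point space as $x$ itself; therefore $\mathrm{Fix}([x\vee y]) = \mathrm{Fix}(x \vee y) = \mathrm{Fix}(x) \cap \mathrm{Fix}(y) = \mathrm{Fix}([x]) \cap \mathrm{Fix}([y]) = \mathrm{Fix}([x]\vee[y])$, whence equality. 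Next, $W$-equivariance of $x \mapsto [x]$ is immediate because conjugation by $w \in W$ sends the pointwise stabilizer of $\mathrm{Fix}(x)$ to the pointwise stabilizer of $w(\mathrm{Fix}(x)) = \mathrm{Fix}(w.x)$. Finally, the map fixes all rank-$1$ subgroups and the trivial subgroup: a cyclic reflection subgroup $\{H\}$ is itself parabolic (it is the full pointwise stabilizer of the hyperplane $H$), so $[\{H\}] = \{H\}$, and likewise $[\{1\}] = \{1\}$. This establishes that $x \mapsto [x]$ is an admissible map $\mathcal{L}_{\infty} \to \mathcal{L}_p$.

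For the induced algebra map, I would invoke that $\mathcal{C}_{\kk}(W,\bullet)$ is a functor from $\mathcal{CL}_W$ to $\kk$-algebras, as recorded earlier in the excerpt; applying it to the admissible map $\mathcal{L}_{\infty} \to \mathcal{L}_p$ gives an algebra morphism $\mathcal{C}(W,\mathcal{L}_{\infty}) \to \mathcal{C}(W,\mathcal{L}_p)$. Concretely, this morphism is induced by the map on $R B \ltimes R\mathcal{L}_{\infty}$ which is the identity on $B$ and sends $e_x \mapsto e_{[x]}$; one checks it carries the defining ideal $\mathfrak{J}$ into $\mathfrak{J}$, which is clear since the generator $\sigma^m - 1 - Q_s(\sigma)e_H$ maps to the corresponding generator (the rank-$1$ subgroup $\{H\}$ being fixed by parabolic closure). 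Surjectivity follows because the images of the generators $g_s$ ($s\in S$, or the braided reflections in general) and $e_H$ ($H \in \mathcal{A}$) already generate $\mathcal{C}(W,\mathcal{L}_p)$, and each $e_{\la}$ for $\la \in \mathcal{L}_p$ is a product of such $e_H$'s.

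The step I expect to require the most care is the identity $[x \vee y] = [x]\vee[y]$, specifically the clean use of Steinberg's fixed-point theorem to identify $\mathrm{Fix}$ of a parabolic closure with $\mathrm{Fix}$ of the subgroup — one must be sure that "full reflection subgroup generated by $x$ and $y$" and "intersection of fixed spaces" interact as claimed, i.e.\ that the join in $\mathcal{L}_{\infty}$ really does have fixed-point space $\mathrm{Fix}(x)\cap\mathrm{Fix}(y)$ (this uses that $\mathcal{L}_{\infty}$ consists of \emph{full} reflection subgroups, so the join is the full reflection subgroup whose hyperplanes are exactly those $H \in \mathcal{A}$ with $\mathrm{Fix}(x)\cap\mathrm{Fix}(y) \subseteq H$). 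Everything else is formal.
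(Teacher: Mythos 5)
Your proof is correct and follows essentially the same route as the paper: both reduce the key identity $[x\vee y]=[x]\vee[y]$ to the injectivity of $\mathrm{Fix}$ on $\mathcal{L}_p$ together with the facts $\mathrm{Fix}([x])=\mathrm{Fix}(x)$ and $\mathrm{Fix}(x\vee y)=\mathrm{Fix}(x)\cap\mathrm{Fix}(y)$, and then obtain the algebra morphism from functoriality. Your additional remarks (Steinberg's theorem for $\mathrm{Fix}([x])=\mathrm{Fix}(x)$, preservation of the defining ideal, surjectivity) are correct elaborations of points the paper leaves implicit.
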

\begin{proof}
First note that, for every $E,F \subset W$,
we have $\mathrm{Fix}(E \cup F) = \mathrm{Fix}(E) \cap \mathrm{Fix}(F)$,
$\mathrm{Fix}(E) = \mathrm{Fix}(\langle E \rangle)$, and
 $\mathrm{Fix}(x) = \mathrm{Fix}([x])$
if $x$ is a reflection subgroup. From this we get that,
for all $x,y \in \mathcal{L}$, we have
on the one hand $\mathrm{Fix}([ \langle x, y \rangle ]) = \mathrm{Fix}(\langle x \cup y \rangle)
= \mathrm{Fix}( x \cup y) = \mathrm{Fix}(x) \cap \mathrm{Fix}(y)$,
and on the other hand
$\mathrm{Fix}([ \langle [x] \cup [y] \rangle ])
\mathrm{Fix}(\langle [x] \cup [y] \rangle)
= \mathrm{Fix}([x] \cup [y]) = \mathrm{Fix}([x])\cap \mathrm{Fix}([y])
= \mathrm{Fix}(x) \cap \mathrm{Fix}(y)$.
Since $\mathrm{Fix}$ is a bijection $\mathcal{L}_p \to L(\mathcal{A})$
this proves $[ \langle x, y \rangle ] = [ \langle [x] \cup [y] \rangle ]$,
and this proves the claim, the $W$-invariance being obvious.

\end{proof}

From this we recover the definition of $\mathcal{C}_W^p = \mathcal{C}(W,\mathcal{L}_p)$
given in \cite{YH1} in the case of a finite Coxeter group, and extend 
the map $\mathcal{C}(W,\mathcal{L}_{\infty}) \to \mathcal{C}_W^p$
to the complex reflection group case.

\subsection{Root systems}
 Let $R$ be a reduced root system (in the sense of Bourbaki), $W$ the
 associated real reflection group. To each $\alpha \in R$ we associate the corresponding
 reflection $s_{\alpha} = s_{-\alpha} \in W$. A root subsystem of $R$ is by
 definition a subset $R'$ of $R$ stable under every $s_{\alpha}, \alpha \in R'$. The
 subgroup of $W$ generated by the $s_{\alpha}$ for $\alpha \in R'$ is a reflection
 subgroup, and the map $R' \mapsto \langle s_{\alpha}, \alpha \in R'$ defines
 a bijection between the set $\mathcal{L}_R$ of all root subsystems and
 $\mathcal{L}_{\infty}$. The preordering induced by this bijection on $\mathcal{L}_R$
 is simply the inclusion ordering. We endow $\mathcal{L}_R$ we the corresponding
join semilattice structure. The cyclic reflection subgroups of $W$ correspond
to the root subsystems $\{ \alpha, - \alpha \}$ for $\alpha \in R$.
 
 We let $\mathcal{L}_c$ denote the subset of $\mathcal{L}_R$ corresponding
 to the \emph{closed} subsystems, namely the $R' \in \mathcal{L}_{\infty}$ for which $\forall \alpha,
 \beta \in R'  \ \ \ \alpha + \beta \in R \Rightarrow \alpha + \beta  \in R'$. Note
 that an intersection of closed subsystems is a closed subsystem, and that the
 subsystems of the form $\{ \alpha, - \alpha \}$ as well as the
 empty subsystem are closd.
 We have a map $c : \mathcal{L}_R \to \mathcal{L}_c$ which associates to
 $R' \in \mathcal{L}_R$ its \emph{closure}, namely the intersection of all closed 
 subsystems containing it. It is immediately checked that $c$ is $W$-equivariant
  and a join semilattice morphism. From this it follows that we get
  an admissible map $\mathcal{L}_{\infty} \simeq \mathcal{L}_R \to \mathcal{L}_c$.

This proves the following.

\begin{proposition} Let $R$ be a reduced root system and $W$ the associated finite Coxeter group. Under
the identification $\mathcal{L}_{\infty} \simeq \mathcal{L}_R$,
the map $c : \mathcal{L}_R \to \mathcal{L}_c$ induces a surjective morphism
$\mathcal{C}(W,\mathcal{L}_{\infty}) \to \mathcal{C}(W,\mathcal{L}_c)$.
\end{proposition}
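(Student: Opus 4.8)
The plan is to exhibit the map $c:\mathcal{L}_R\to\mathcal{L}_c$ as a morphism in the category $\mathcal{CL}_W$ and then invoke the functoriality of $\mathcal{C}_{\kk}(W,\bullet)$, together with the fact that a surjection of lattices induces a surjection of the associated algebras. Concretely, I would first recall the three things already checked in the paragraph preceding the statement: that $c$ is well-defined (an intersection of closed subsystems is closed, so the closure exists), that $c$ is $W$-equivariant, and that $c$ is a morphism of join semi-lattices, i.e. $c(R'\vee R'')=c(R')\vee c(R'')$. I would also note that $c$ fixes the trivial subsystem and every rank-one subsystem $\{\alpha,-\alpha\}$, since these are already closed; hence $c$ is an \emph{admissible} map $\mathcal{L}_{\infty}\simeq\mathcal{L}_R\to\mathcal{L}_c$, and $\mathcal{L}_c$ is itself an admissible lattice (it is $W$-stable, contains the cyclic subgroups and the trivial group, and is closed under join by the morphism property applied to the identity, or directly since $c$ is a retraction onto it).

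Next I would apply the functor $\mathcal{C}_{\kk}(W,\bullet)$, which by the discussion in the previous subsections sends an admissible map $\varphi:\mathcal{L}\to\mathcal{L}'$ to an algebra morphism $\mathcal{C}_{\kk}(W,\mathcal{L})\to\mathcal{C}_{\kk}(W,\mathcal{L}')$: on generators it is $\sigma\mapsto\sigma$ for $\sigma$ a braided reflection and $e_\lambda\mapsto e_{\varphi(\lambda)}$, and one checks this respects the defining relations $e_\lambda e_\mu=e_{\lambda\vee\mu}$ (because $\varphi$ commutes with $\vee$), the semidirect-product relations (because $\varphi$ is $W$-equivariant), and the relations $\sigma^{m_H}-1-Q_s(\sigma)e_H$ (because $\varphi$ fixes the rank-one subgroup $\langle s\rangle$, so $e_H\mapsto e_H$). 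Applying this to $\varphi=c$ gives the desired morphism $\mathcal{C}(W,\mathcal{L}_{\infty})\to\mathcal{C}(W,\mathcal{L}_c)$ over $R$.

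Finally I would argue surjectivity. The source $\mathcal{C}(W,\mathcal{L}_{\infty})$ is generated as an algebra by the images of the braided reflections together with the idempotents $e_\lambda$, $\lambda\in\mathcal{L}_{\infty}$; the braided reflections map to the braided reflections, and each generator $e_{\mu}$ of the target, for $\mu\in\mathcal{L}_c\subseteq\mathcal{L}_{\infty}$, is the image of $e_\mu$ (since $c$ restricted to $\mathcal{L}_c$ is the identity, $c$ being a retraction). Hence the image contains a generating set of $\mathcal{C}(W,\mathcal{L}_c)$ and the morphism is onto.

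There is no serious obstacle here: everything reduces to facts already established, and the only point requiring a line of care is that $c$ is the identity on $\mathcal{L}_c$ and on the cyclic and trivial subgroups, so that $e_\mu$ and $e_H$ are preserved on the nose — which is exactly what makes $c$ admissible and the map between the algebras well-defined and surjective. One could equivalently avoid explicit mention of the functor and simply check by hand that $g_s\mapsto g_s$, $e_H\mapsto e_{c(H)}=e_H$ sends the defining relations of $\mathcal{C}(W,\mathcal{L}_{\infty})$ to relations holding in $\mathcal{C}(W,\mathcal{L}_c)$; the verification is identical in spirit to the proof that $x\mapsto[x]$ induces $\mathcal{C}(W,\mathcal{L}_{\infty})\to\mathcal{C}(W,\mathcal{L}_p)$.
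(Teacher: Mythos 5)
Your proof is correct and takes essentially the same route as the paper: the paper likewise checks that $c$ is $W$-equivariant and a morphism of join semi-lattices, concludes that it is an admissible map $\mathcal{L}_{\infty}\simeq\mathcal{L}_R\to\mathcal{L}_c$, and then invokes the functoriality of $\mathcal{C}_{\kk}(W,\bullet)$. Your explicit verification that $c$ fixes the cyclic and trivial subsystems and your surjectivity argument (via $c$ being a retraction onto $\mathcal{L}_c$) merely spell out steps the paper leaves implicit.
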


This proposition proves that the algebra $\mathcal{C}(W,\mathcal{L}_c)$
is isomorphic to the algebra $\mathcal{C}_W^R$ of \cite{YH1},
which generically embeds into the corresponding
Yokonuma-Hecke algebra. Indeed, 
$\mathcal{C}_W^R$ is defined as a quotient of $\mathcal{C}(W,\mathcal{L}_{\infty}) =
\mathcal{C}_W$, and one gets immediately that the map $\mathcal{C}_W \onto
\mathcal{C}(W,\mathcal{L}_c)$ defined above factors through $\mathcal{C}_W \onto
\mathcal{C}_W^{(R)}$. The induced surjective
map $\mathcal{C}_W^{(R)} \to \mathcal{C}(W,\mathcal{L}_c)$ is then checked
to be injective, since the natural spanning set of $\mathcal{C}_W^{(R)}$
is mapped to a basis of $\mathcal{C}(W,\mathcal{L}_c)$. It is then immediately
checked that the corresponding diagram of isomorphisms and natural projections
is commutative.

$$
\xymatrix{
\mathcal{C}_W \ar[rr]^{\simeq} \ar@{->>}[dr] \ar@{->>}[dd] & & \mathcal{C}(W,\mathcal{L}_{\infty})\ar@{->>}[dd] | \hole \ar@{->>}[dr] \\
 & \mathcal{C}_W^{(R)} \ar@{->>}[dl] \ar[rr]^{\simeq}  & & \mathcal{C}(W,\mathcal{L}_c) \ar@{->>}[dl] \\
 \mathcal{C}_W^{(p)} \ar[rr]^{\simeq} & & \mathcal{C}(W, \mathcal{L}_p)
}
$$

\subsection{A priori unrelated examples}
A computer-aided exploration shows that there are other
admissible lattices not originating a priori from root systems, with $\mathcal{L}_p 
\subset \mathcal{L} \subset \mathcal{L}_{\infty}$. In type $A$
we have $\mathcal{L}_p = \mathcal{L}_{\infty}$, but in type $D_n$ for $n \geq 4$
we have $\mathcal{L}_p \subsetneq \mathcal{L}_{\infty}$ while all root subsystems
are closed. We checked for small $n$ whether there are other admissible lattices in type $D_n$. This can be done as follows. First of all, one computes the $W$-orbits
for the action on $\mathcal{L}_{\infty} \setminus \mathcal{L}_p$,
since $\mathcal{L}_{\infty} \setminus \mathcal{L}$ has to be an union of them.
For each such union of orbits we then test whether the obtained subset $\mathcal{L}$
satisfies the join semilattice property. In type $D_4$, the action of $W$
on $\mathcal{L}_{\infty} \setminus \mathcal{L}_p$ is transitive (and it the
orbit of a reflection subgroup of type $A_1^4$), so there
is no intermediate admissible lattice. But in type $D_5$, the action has 2 orbits,
one of type $A_1^4$ inherited from type $D_4$, and the other one of type $A_1A_3$.
By adding to $\mathcal{L}_p$ the orbit of type $D_4$ one checks by
computer that the corresponding poset $\mathcal{L}$ is admissible, every
two elements admitting a join. This proves
that examples containing the lattice of parabolic subgroups and which are a priori not related to the theory of root systems do exist.

\end{document}